\renewcommand{\@seccntformat}[1]{{\csname the#1\endcsname}{\normalsize .}\hspace{.5em}}
\def \[{\begin{equation}}
\def \]{\end{equation}}
\newtheorem{thm}{Theorem}[section]
\newtheorem{claim}{Claim}
\newtheorem{lem}[thm]{Lemma}
\begin{document}

\setlength{\baselineskip}{15pt}
\begin{center}{\Large \bf The normalized Laplacian, degree-Kirchhoff index and spanning trees of graphs derived from the strong prism of linear polyomino chain}
\vspace{4mm}

{\large Xiaocong He$^a$\footnote{Corresponding author. \\
\hspace*{5mm}{\it Email addresses}: hexc2018@qq.com (X.C. He)}\vspace{2mm}}

$^a$School of Mathematics and Statistics,  Central South University, New campus,\\
Changsha, Hunan, 410083, P.R. China
\end{center}


\noindent {\bf Abstract}: Let $B_n$ be a linear polyomino chain with $n$ squares. Let $B_n^2$ be the graph obtained by the strong prism of a linear polyomino chain with $n$ squares, i.e. the strong product of $K_2$ and $B_n$. In this paper, explicit expressions for degree-Kirchhoff index and number of spanning trees of $B^2_n$ are determined, respectively. Furthermore, it is interesting to find that the degree-Kirchhoff index of $B^2_n$ is almost one eighth of its Gutman index.

\vspace{2mm} \noindent{\bf Keywords}: Linear polyomino chain; Normalized  Laplacian; Degree-Kirchhoff index; Spanning tree

\vspace{2mm}

\noindent{AMS subject classification:}  05C50

\setcounter{section}{0}
\section{\normalsize Introduction}\setcounter{equation}{0}
In this paper, we only consider simple and undirected graphs. Let $G=(V(G), E(G))$ be a graph with the vertex set $V(G)=\{v_1,v_2,\ldots,v_n\}$ and the edge set $E(G)$. The \textit{adjacency matrix} of $G$ is a square matrix $A(G)=(a_{ij})_{n\times n}$ with entries $a_{ij}=1$ or $0$ according as the corresponding vertices $v_i$ and $v_j$ are adjacent or not. Let $D(G)={\rm diag}(d_1, d_2, \ldots, d_n)$ be the diagonal matrix of vertex degrees, where $d_i$ is the degree of $v_i$ in $G$ for
$1 \leqslant  i \leqslant n.$ The (\textit{combinatorial}) \textit{Laplacian matrix} of $G$ is defined as $L(G)=D(G)-A(G)$.

The classical distance between vertices $v_i$ and $v_j$ in a graph $G$, denoted by $d_{ij}$, is the length of a shortest path in $G$ connecting them. A well-known topological descriptor called \textit{Wiener index}, $W(G),$ was given by $W(G)=\sum_{i<j}d_{ij}$ in \cite{011}. Later, Gutman \cite{008} introduced the weighted version of Wiener index, namely \textit{Gutman index} of $G$, which was defined as $Gut(G)=\sum_{i<j}d_id_jd_{ij}$. In \cite{008}, it was shown that when $G$ is a tree on $n$ vertices, then the Wiener index and Gutman index are closely related by $Gut(G)=4W(G)-(2n-1)(n-1).$

On the basis of electrical network theory, Klein and Randi\'{c} \cite{1} proposed a novel distance function, namely the \textit{resistance distance}, on a graph. The term resistance distance was used because of the physical interpretation: place unit resistors on each edge of a graph $G$ and take the resistance distance, $r_{ij},$ between vertices $i$ and $j$ of $G$ to be the effective resistance between them. This novel parameter is in fact intrinsic to the graph and has some nice interpretations and applications in chemistry (see \cite{5,3} for details). It is well known that the resistance distance between two arbitrary vertices in an electronic network can be obtained in terms of the eigenvalues and eigenvectors of the Laplacian matrix associated with the network. One famous resistance distance-based parameter called the \textit{Kirchhoff index}, $Kf(G),$ was given by $Kf(G)=\sum_{i<j}r_{ij}$; see \cite{1}. Later it is shown \cite{3,9} that
$$
  Kf(G) =\sum_{i<j} r_{ij}=n\sum_{i=2}^n\frac{1}{\mu_i},
$$
where $0=\mu_1\leqslant\mu_2\leqslant \cdots \leqslant \mu_n \, (n\geqslant 2)$ are the eigenvalues of $L(G).$

In recent years, the \textit{normalized Laplacian}, $\mathcal{L}(G)$, which is consistent with the matrix in spectral geometry and random walks \cite{4}, has attracted more and more researchers' attention. One of the original motivations for defining the normalized Laplacian was to deal more
naturally with nonregular graphs. The normalized Laplacian is defined to be
$$
  \mathcal{L}=I-D^{\frac{1}{2}}(D^{-1}A)D^{-\frac{1}{2}}=D^{-\frac{1}{2}}LD^{-\frac{1}{2}}
$$ (with the convention that if the degree of vertex $v_i$ in $G$ is 0, then $(d_i)^{-\frac{1}{2}}=0;$ see \cite{4}). Thus it is easy to obtain that
\begin{eqnarray}\label{eq:1.1}
(\mathcal{L}(G))_{ij}=\left\{
                               \begin{array}{lll}
                               1, & \hbox{if $i=j$;} \\
                               -\frac{1}{\sqrt{d_id_j}}, & \hbox{if $i\neq j$ and $v_i$ is adjacent to $v_j;$} \\
                               0, & \hbox{otherwise,}
                               \end{array}
                            \right.
\end{eqnarray}
where $(\mathcal{L}(G))_{ij}$ denotes the $(i,j)$-entry of $\mathcal{L}(G)$. In 2005, Chen and Zhang \cite{10} showed that the resistance
distance can be expressed naturally in terms of the eigenvalues and eigenvectors of the normalized Laplacian and proposed another graph invariant, defined by $Kf^*(G)=\sum_{i<j}d_id_jr_{ij}$, which is called the \textit{degree-Kirchhoff index} (see \cite{12,800}). It is closely related to the corresponding spectrum of the normalized Laplacian (see Lemma 2.3 in the next section). The spectrum of $\mathcal{L}(G)$ is denoted by $S(G)=\{\lambda_1,\lambda_2,\ldots,\lambda_n\}$ with $0=\lambda_1\leqslant\lambda_2\leqslant\ldots\leqslant\lambda_n.$ It is well known that $G$ is connected if and only if $\lambda_2>0$.

It is well-known \cite{1} that $r_{ij}\leq d_{ij}$ with equality if and only if there is a unique path connecting vertices $v_i$ and $v_j$ in $G$. As an immediate consequence, for a tree $G$, $Kf(G)=W(G)$ and $Kf^*(G)=Gut(G)$. Thus in the research on the Kirchhoff index and the degree Kirchhoff index of graphs, it is primarily of interest in the case of cycle-containing graphs. Up to now, closed-form formula for Kirchhoff index have been given for some classes of graphs, such as cycles \cite{001}, circulant graphs \cite{003}, composite graphs \cite{004} and some other topics on Kirchhoff index of graphs may be referred to \cite{500,E-A-J,22s,hl,p1,p2,p3,q,11s,33s,ZZ3,005,006,060,Y-W-Z} and references therein.

A polyomino system is a finite 2-connected plane graph such that each interior face (or say a cell) is surrounded by a regular square of length one. Polyominos have a long and rich history, they now constitute one of the most popular subjects in mathematical recreations, and have found interest among mathematicians, physicists, biologists, and computer scientists as well. So, several molecular structure descriptors based on network structural descriptors, have been introduced, see for instance \cite{35}. In particular, in the last decade a great amount of work devoted to calculating the Kirchhoff index of linear polyominoes-like networks, have been published, one may be referred to \cite{34} and references therein for some details.
Let $B_n$ denote a linear polyomino chain with $n$ squares as depicted in Fig. 1. Then it is routine to check that $|V(B_n)|=2n+2$ and $|E(B_n)|=3n+1$.
\begin{figure}[h!]
\begin{center}
\psfrag{1}{$u_1$}\psfrag{2}{$u_2$}\psfrag{3}{$u_3$}\psfrag{4}{$u_n$}
\psfrag{5}{$u_{n+1}$}
\psfrag{a}{$v_1$}\psfrag{b}{$v_2$}\psfrag{c}{$v_3$}\psfrag{d}{$v_n$}
\psfrag{e}{$v_{n+1}$}
\psfrag{k}{$\cdots$}\psfrag{l}{$B_n$}
\includegraphics[width=80mm]{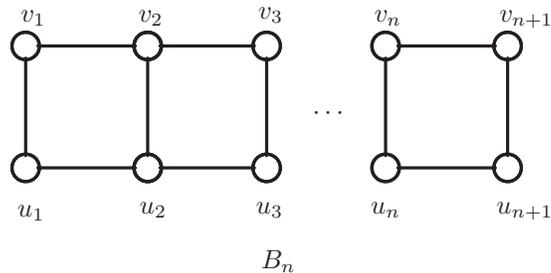} \\
  \caption{ Graph $B_n$ and its labeled vertices.} 
\end{center}
\end{figure}

\begin{figure}[h!]
\begin{center}
\psfrag{1}{$u_1$}\psfrag{2}{$u_2$}\psfrag{3}{$u_3$}\psfrag{4}{$u_n$}
\psfrag{5}{$u_{n+1}$}\psfrag{0}{$u'_1$}\psfrag{6}{$u'_2$}\psfrag{7}{$u'_3$}
\psfrag{8}{$u'_n$}\psfrag{9}{$u'_{n+1}$}
\psfrag{a}{$v_1$}\psfrag{b}{$v_2$}\psfrag{c}{$v_3$}\psfrag{d}{$v_n$}
\psfrag{e}{$v_{n+1}$}\psfrag{f}{$v'_1$}\psfrag{g}{$v'_2$}\psfrag{h}{$v'_3$}
\psfrag{i}{$v'_n$}\psfrag{j}{$v'_{n+1}$}
\psfrag{k}{$\cdots$}\psfrag{l}{$B^2_n$}
\includegraphics[width=120mm]{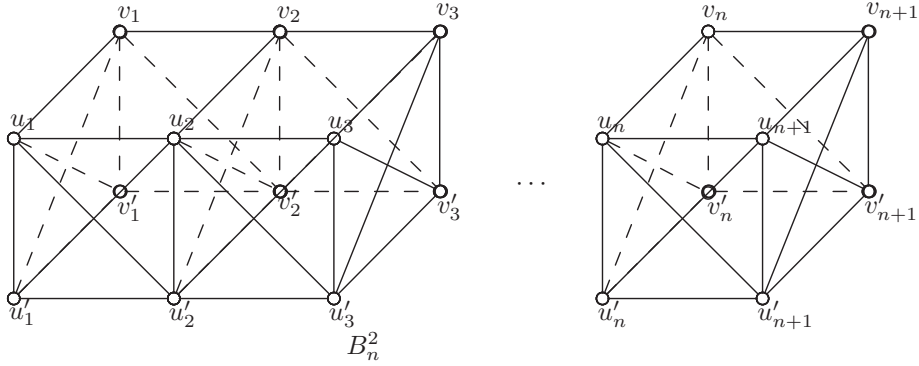} \\
  \caption{ Graph $B^2_n$ and its labeled vertices.} 
\end{center}
\end{figure}

Given two graphs $G$ and $H$, the strong product of $G$ and $H$, denoted by $G\boxtimes H$, is the graph with vertex set $V(G)\times V(H)$, where two distinct vertices $(u_1, v_1)$ and $(u_2, v_2)$ are adjacent whenever $u_1$ and $u_2$ are equal or adjacent in $G$, and $v_1$ and $v_2$ are equal or adjacent in $H$. Specially, the strong product of $G$ and $K_2$ is called the strong prism of $G$. Very recently, Pan et al. \cite{p2,p3} determined some resistance distance-based invariants and number of spanning trees of graphs derived from the strong prism of some special graphs, such as the path $P_n$ and the cycle $C_n$. Li et al. \cite{p1} determined some resistance distance-based invariants and number of spanning trees of graphs derived from the strong prism of the star $S_n$. Along this line, we consider the strong prism of a linear polyomino chain $B_n$. Let $B_n^2$ be the strong prism of $B_n$ as depicted in Fig. 2. Obviously, $|V(B_n^2)|=4n+4$ and $|E(B_n^2)|=2(7n+3)$.

In this paper, motivated by \cite{S-B-B,10,800,J-H-L,hl,p1,p2,p3,700,X-Z-F,34}, explicit expressions for degree-Kirchhoff index and number of spanning trees of $B^2_n$ are determined, respectively. Furthermore, we are surprised to see that the degree-Kirchhoff index of $B^2_n$ is approximately one eighth of its Gutman index.

\section{\normalsize Preliminaries}\setcounter{equation}{0}
Throughout this paper, we shall denote by $\Phi(B) = \det(xI - B)$ the \textit{characteristic polynomial} of the square matrix $B$. In particular, if $B = \mathcal{L}(G)$, we write $\Phi(\mathcal{L}(G))$ by $\Psi(G; x)$ and call $\Psi(G; x)$ the \textit{normalized Laplacian characteristic polynomial} of $G$.

Let $V_1=\{u_1,u_2,\ldots,u_{n+1},v_1,v_2,\ldots, v_{n+1}\}, V_2=\{u'_1,u'_2,\ldots, u'_{n+1},v'_1,v'_2,\ldots, v'_{n+1}\}$. Then by a suitable arrangement of vertices in $B_n^2$, $\mathcal{L}(B_n^2)$ can be written as the following block matrix
$$
\mathcal{L}(G)=
\left(
\begin{tabular}{c|c}
\text{$\begin{array}{c}
      \mathcal{L}_{V_{11}} \\
    \end{array}$}
&\text{$\begin{array}{c}
          \mathcal{L}_{V_{12}}\\
       \end{array}$}
\\\hline
\text{$\begin{array}{c}
       \mathcal{L}_{V_{21}}  \\
    \end{array}$}
&\text{$\begin{array}{c}
        \mathcal{L}_{V_{22}} \\
       \end{array}$}
\end{tabular}
\right),
$$
where $\mathcal{L}_{V_{ij}}$ is the submatrix formed by rows corresponding to vertices in $V_i$ and columns corresponding to vertices in $V_j$ for $i,j=1,2$. Owing to the symmetry construction of the graph $B_n^2$, it is obvious
that $\mathcal{L}_{V_{11}}(B_n^2)=\mathcal{L}_{V_{22}}(B_n^2)$ and $\mathcal{L}_{V_{12}}(B_n^2)=\mathcal{L}_{V_{21}}(B_n^2)$. Let
\[\label{eq:2.01}
T=\left(
\begin{tabular}{c|c}
\text{$\begin{array}{c}
      \frac{1}{\sqrt{2}}I_{2n+2} \\
    \end{array}$}
&\text{$\begin{array}{c}
         \frac{1}{\sqrt{2}}I_{2n+2}\\
       \end{array}$}
\\\hline
\text{$\begin{array}{c}
       \frac{1}{\sqrt{2}}I_{2n+2}\\
    \end{array}$}
&\text{$\begin{array}{c}
        -\frac{1}{\sqrt{2}}I_{2n+2}\\
       \end{array}$}
\end{tabular}
\right)
\]
be the block matrix such that the blocks have the same dimension as the corresponding blocks in $\mathcal{L}(G)$. By a simple calculation, one can see that
\begin{eqnarray}\label{eq:2.1}
T\mathcal{L}(B_n^2)T=\left(
\begin{tabular}{c|c}
\text{$\begin{array}{c}
      \mathcal{L}_A(B_n^2) \\
    \end{array}$}
&\text{$\begin{array}{c}
          \bf{0}\\
       \end{array}$}
\\\hline
\text{$\begin{array}{c}
        \bf{0}  \\
    \end{array}$}
&\text{$\begin{array}{c}
        \mathcal{L}_S(B_n^2)\\
       \end{array}$}
\end{tabular}
\right),
\end{eqnarray}
where $\mathcal{L}_A(B_n^2)=\mathcal{L}_{V_{11}}+\mathcal{L}_{V_{12}}$ and $\mathcal{L}_S(B_n^2)=\mathcal{L}_{V_{11}}-\mathcal{L}_{V_{12}}$.

Thus similar to the decomposition theorem obtained in \cite{hl,p1,34}, we can obtain the following decomposition theorem of the normalized Laplacian polynomial.
\begin{thm}
Let $\mathcal{L}(G), \mathcal{L}_A(G), \mathcal{L}_S(G)$ be defined as above. Then
$$
\Psi(G;x)=\Phi(\mathcal{L}_A(G))\cdot\Phi(\mathcal{L}_S(G)).
$$
\end{thm}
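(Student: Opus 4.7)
The plan is to exploit the fact that the matrix $T$ defined in (2.1) is an involutive orthogonal similarity, so that it preserves the characteristic polynomial, and then to read off the conclusion from the block-diagonal form of $T\mathcal{L}(G)T$ displayed in (2.2).

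First, I would verify directly that $T$ is symmetric and satisfies $T^2 = I_{4n+4}$; a straightforward block computation gives
\[
T^{2}=\tfrac12\begin{pmatrix} I+I & I-I\\ I-I & I+I\end{pmatrix}=I,
\]
so $T^{-1}=T$ and $T$ is orthogonal. Consequently $T\mathcal{L}(G)T = T^{-1}\mathcal{L}(G)T$ is similar to $\mathcal{L}(G)$, and similar matrices share the same characteristic polynomial. Hence
\[
\Psi(G;x) \;=\; \det\!\bigl(xI-\mathcal{L}(G)\bigr) \;=\; \det\!\bigl(xI-T\mathcal{L}(G)T\bigr).
\]

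Next I would invoke the block decomposition (2.2), which asserts that $T\mathcal{L}(G)T$ is block-diagonal with diagonal blocks $\mathcal{L}_A(G)=\mathcal{L}_{V_{11}}+\mathcal{L}_{V_{12}}$ and $\mathcal{L}_S(G)=\mathcal{L}_{V_{11}}-\mathcal{L}_{V_{12}}$. Because the determinant of a block-diagonal matrix factors as the product of the determinants of its diagonal blocks, we obtain
\[
\det\!\bigl(xI-T\mathcal{L}(G)T\bigr)=\det\!\bigl(xI_{2n+2}-\mathcal{L}_A(G)\bigr)\cdot\det\!\bigl(xI_{2n+2}-\mathcal{L}_S(G)\bigr)=\Phi(\mathcal{L}_A(G))\cdot\Phi(\mathcal{L}_S(G)),
\]
which is the desired identity.

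There is no real obstacle in this argument: the only two nontrivial inputs are (i) that $T^2=I$, which is a one-line block verification, and (ii) the block identity (2.2), which the author has already established as a preliminary. The whole work in this theorem lies in having chosen $T$ so cleverly and in having arranged the vertices of $B_n^2$ into the symmetric partition $V_1, V_2$ that forces $\mathcal{L}_{V_{11}}=\mathcal{L}_{V_{22}}$ and $\mathcal{L}_{V_{12}}=\mathcal{L}_{V_{21}}$; once that symmetry is in hand, the decomposition of $\Psi(G;x)$ follows automatically from the invariance of the characteristic polynomial under orthogonal similarity together with the multiplicativity of $\det$ on block-diagonal matrices.
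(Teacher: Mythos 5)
Your proposal is correct and follows exactly the route the paper intends: the paper establishes the conjugation identity $T\mathcal{L}(B_n^2)T=\mathrm{diag}(\mathcal{L}_A,\mathcal{L}_S)$ in (2.2) and then asserts the theorem ``similar to'' earlier decomposition results, and your argument simply fills in the routine details of that sketch ($T^2=I$ so $T=T^{-1}$, invariance of the characteristic polynomial under similarity, and multiplicativity of $\det$ over the diagonal blocks). No gaps; this matches the paper's approach.
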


\begin{lem}[\cite{4}]
Let $G$ be an $n$-vertex connected graph with $m$ edges, then $\prod_{i=1}^nd_i\prod_{k=2}^n\lambda_k=2m\tau(G),$
where $\tau(G)$ is the number of spanning trees of $G.$
\end{lem}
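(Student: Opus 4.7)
The plan is to derive this identity by combining Kirchhoff's Matrix-Tree theorem for the combinatorial Laplacian with the conjugation relation $\mathcal{L}=D^{-1/2}LD^{-1/2}$. The core idea is to express $\prod_{k=2}^n\lambda_k$ as a sum of $(n-1)\times(n-1)$ principal minors of $\mathcal{L}$, and then reduce each such minor to the corresponding principal minor of $L$, which equals $\tau(G)$ by Kirchhoff.

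First, since $G$ is connected, $0$ is a simple eigenvalue of $\mathcal{L}$ with eigenvector $D^{1/2}\mathbf{1}$, so the normalized Laplacian characteristic polynomial factors as $\Psi(G;x)=x\prod_{k=2}^n(x-\lambda_k)$. Matching the coefficient of $x^1$ on both sides and using the standard identification of the coefficient of $x^\ell$ in $\det(xI-M)$ with the sum of principal minors of $M$ of order $n-\ell$ (up to sign), one obtains
$$
\prod_{k=2}^n\lambda_k=\sum_{i=1}^n\det\mathcal{L}_{(i)},
$$
where $\mathcal{L}_{(i)}$ denotes the principal submatrix of $\mathcal{L}$ obtained by deleting the $i$-th row and column.

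Next, because $D^{-1/2}$ is diagonal, the operation of deleting the $i$-th row and column commutes with the factorization $\mathcal{L}=D^{-1/2}LD^{-1/2}$, so $\mathcal{L}_{(i)}=D_{(i)}^{-1/2}L_{(i)}D_{(i)}^{-1/2}$, where $D_{(i)}$ and $L_{(i)}$ are obtained from $D$ and $L$ by removing the $i$-th row and column. Taking determinants yields
$$
\det\mathcal{L}_{(i)}=\frac{\det L_{(i)}}{\prod_{j\ne i}d_j}=\frac{d_i\,\tau(G)}{\prod_{j=1}^n d_j},
$$
where the last equality is Kirchhoff's Matrix-Tree theorem $\det L_{(i)}=\tau(G)$, valid for every $i$. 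Summing over $i$ and invoking $\sum_{i=1}^n d_i=2m$ gives
$$
\prod_{k=2}^n\lambda_k=\frac{\tau(G)\sum_{i=1}^n d_i}{\prod_{j=1}^n d_j}=\frac{2m\,\tau(G)}{\prod_{j=1}^n d_j},
$$
and clearing the denominator produces the stated identity.

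The only nontrivial ingredient is the Matrix-Tree theorem itself, which I would simply invoke as a classical result; the principal-minor expansion of the characteristic polynomial and the commutation of diagonal conjugation with the principal-submatrix operation are both routine linear algebra. Hence no substantial obstacle is expected, and the main care needed is just bookkeeping of signs and of the index set of deleted rows/columns.
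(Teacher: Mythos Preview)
Your proof is correct. Note, however, that the paper does not actually prove this lemma: it is stated with a citation to Chung's monograph \cite{4} and used as a known result, so there is no ``paper's own proof'' to compare against. Your derivation via the Matrix-Tree theorem together with the diagonal conjugation $\mathcal{L}=D^{-1/2}LD^{-1/2}$ is the standard way to establish the identity, and all the steps (principal-minor expansion of the characteristic polynomial, commutation of diagonal conjugation with deletion of the $i$-th row and column, and the handshake identity $\sum_i d_i=2m$) are valid exactly as you wrote them.
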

\begin{lem}[\cite{10}]
Let $G$ be an $n$-vertex connected graph with $m$ edges, then
$
   Kf^*(G)=2m\sum_{i=2}^n\frac{1}{\lambda_i}.
$
\end{lem}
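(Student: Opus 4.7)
The plan is to derive the identity by spectrally decomposing $\mathcal{L}(G)$ and then exploiting the orthogonality of its eigenvectors against the distinguished vector $D^{1/2}\mathbf{1}$.

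First, I will start from the pseudoinverse representation of resistance distance, $r_{ij}=(\mathbf{e}_i-\mathbf{e}_j)^{T}L^{+}(\mathbf{e}_i-\mathbf{e}_j)$, and translate it into a formula involving $\mathcal{L}(G)$. Using $L=D^{1/2}\mathcal{L}D^{1/2}$, one checks that $w:=D^{-1/2}\mathcal{L}^{+}D^{-1/2}(\mathbf{e}_i-\mathbf{e}_j)$ satisfies $Lw=\mathbf{e}_i-\mathbf{e}_j$. Any two solutions of this linear system differ by a multiple of $\mathbf{1}$, and since $(\mathbf{e}_i-\mathbf{e}_j)^{T}\mathbf{1}=0$, one obtains the clean identity
$$r_{ij}=(\mathbf{e}_i-\mathbf{e}_j)^{T}D^{-1/2}\mathcal{L}^{+}D^{-1/2}(\mathbf{e}_i-\mathbf{e}_j).$$

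Next, I will choose an orthonormal eigenbasis $\phi_1,\ldots,\phi_n$ of $\mathcal{L}(G)$ with $\lambda_1=0$ and $\phi_1=(2m)^{-1/2}D^{1/2}\mathbf{1}$. Expanding $\mathcal{L}^{+}=\sum_{k\ge 2}\lambda_k^{-1}\phi_k\phi_k^{T}$ and substituting yields
$$r_{ij}=\sum_{k=2}^{n}\frac{1}{\lambda_k}\left(\frac{\phi_k(i)}{\sqrt{d_i}}-\frac{\phi_k(j)}{\sqrt{d_j}}\right)^{2}.$$
Multiplying by $d_id_j$, summing over $i<j$, and swapping the order of summation reduces the problem to proving, for each fixed $k\ge 2$,
$$S_k:=\sum_{i<j}d_id_j\left(\frac{\phi_k(i)}{\sqrt{d_i}}-\frac{\phi_k(j)}{\sqrt{d_j}}\right)^{2}=2m.$$
To verify $S_k=2m$, I would expand the square, convert $\sum_{i<j}$ into $\tfrac{1}{2}\sum_{i,j}$ by adding and then subtracting diagonal terms, and apply two standard facts about the eigenbasis: $\|\phi_k\|^2=1$ (so $\sum_i\phi_k(i)^2=1$) and the orthogonality $\phi_k\perp\phi_1$, equivalently $\sum_i\sqrt{d_i}\,\phi_k(i)=0$. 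A short calculation then shows that the squared terms contribute $2m-\phi_k^{T}D\phi_k$ (using $\sum_jd_j=2m$ and unit norm), while the cross term contributes $+\phi_k^{T}D\phi_k$ because its overall sum factors as $\bigl(\sum_i\sqrt{d_i}\,\phi_k(i)\bigr)^2=0$; the two cancel and $S_k=2m$. Summing the identity over $k\ge 2$ yields $Kf^{*}(G)=2m\sum_{k=2}^{n}1/\lambda_k$.

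The hard step will be the passage from $L^{+}$ to $\mathcal{L}^{+}$: because $L^{+}$ and $D^{-1/2}\mathcal{L}^{+}D^{-1/2}$ are \emph{not} equal as matrices, one must invoke the $\mathbf{1}$-null-space freedom in the system $Lw=\mathbf{e}_i-\mathbf{e}_j$ carefully to justify the substitution. Once this is in place, the remaining computation is a routine Gram-matrix manipulation relying only on the orthonormality of the eigenbasis and the explicit form of the kernel eigenvector $D^{1/2}\mathbf{1}$.
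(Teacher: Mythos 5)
Your proof is correct, but there is nothing in the paper to compare it against: the statement is Lemma 2.3, which the paper imports from Chen and Zhang \cite{10} without proof. Your argument is essentially a reconstruction of the Chen--Zhang proof: the identity $r_{ij}=\sum_{k\ge 2}\lambda_k^{-1}\bigl(\phi_k(i)/\sqrt{d_i}-\phi_k(j)/\sqrt{d_j}\bigr)^{2}$ is exactly their spectral formula for resistance distance, and your reduction to $S_k=2m$ via orthonormality and $\phi_k\perp D^{1/2}\mathbf{1}$ is the standard finishing step. The step you flag as delicate is indeed handled correctly: since $G$ is connected, $\ker\mathcal{L}$ is spanned by $\phi_1=(2m)^{-1/2}D^{1/2}\mathbf{1}$, so $\mathcal{L}\mathcal{L}^{+}=I-\phi_1\phi_1^{T}$, and with $w=D^{-1/2}\mathcal{L}^{+}D^{-1/2}(\mathbf{e}_i-\mathbf{e}_j)$ one gets $Lw=(\mathbf{e}_i-\mathbf{e}_j)-D^{1/2}\phi_1\cdot(2m)^{-1/2}\mathbf{1}^{T}(\mathbf{e}_i-\mathbf{e}_j)=\mathbf{e}_i-\mathbf{e}_j$, after which the null-space freedom argument (using $(\mathbf{e}_i-\mathbf{e}_j)^{T}\mathbf{1}=0$) legitimizes the substitution even though $L^{+}\neq D^{-1/2}\mathcal{L}^{+}D^{-1/2}$. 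One small simplification worth noting: in your computation of $S_k$ the diagonal terms vanish identically, so you may write $S_k=\tfrac12\sum_{i,j}$ directly, whereupon the squared terms give $\tfrac12(2m+2m)$ and the cross term gives $-\bigl(\sum_i\sqrt{d_i}\,\phi_k(i)\bigr)^{2}=0$, avoiding the cancellation bookkeeping with $\phi_k^{T}D\phi_k$ altogether.
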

\section{\normalsize Degree-Kirchhoff index and the number of spanning trees of linear polyomino chain }\setcounter{equation}{0}
In this section, we first determine the normalized Laplacian eigenvalues of $B^2_n$ according to Theorem 2.1. Then we provide a complete description of the product of the normalized Laplacian eigenvalues and the sum of the normalized Laplacian eigenvalues' reciprocals which will be used in computing the degree-Kirchhoff index and the number of spanning trees of $B^2_n$. Finally, we show that the degree-Kirchhoff index of $B^2_n$ is approximately one half of its Gutman index.

For convenience, we abbreviate $\mathcal{L}_A(B^2_n)$ and $\mathcal{L}_S(B^2_n)$ to $\mathcal{L}_A$ and $\mathcal{L}_S$, respectively. We label the vertices of $B^2_n$ as depicted in Fig.~2. Obviously,
\begin{align*}
&\mathcal{L}_{11}(B^2_n)\\
=&{\left(
                       \begin{array}{cccccccccccccc}
                         1 & -\frac{1}{\sqrt{35}} & 0 & \cdots & 0 & 0 & 0 & -\frac{1}{5} & 0 & 0 & \cdots & 0 & 0 & 0 \\[3pt]
                         -\frac{1}{\sqrt{35}} & 1 & -\frac{1}{7} & \cdots & 0 & 0 & 0 & 0 & -\frac{1}{7} & 0 & \cdots & 0 & 0 & 0 \\[3pt]
                         0 & -\frac{1}{7} & 1 & \cdots & 0 & 0 & 0 & 0 & 0 & -\frac{1}{7} & \cdots & 0 & 0 & 0 \\[3pt]
                         \vdots & \vdots & \vdots & \ddots & \vdots & \vdots & \vdots & \vdots & \vdots & \vdots & \ddots & \vdots & \vdots & \vdots \\[3pt]
                         0 & 0 & 0 & \cdots & 1 & -\frac{1}{7} & 0 & 0 & 0 & 0 & \cdots & -\frac{1}{7} & 0 & 0 \\[3pt]
                         0 & 0 & 0 & \cdots & -\frac{1}{7} & 1 & -\frac{1}{\sqrt{35}} & 0 & 0 & 0 & \cdots & 0 & -\frac{1}{7} & 0 \\[3pt]
                         0 & 0 & 0 & \cdots & 0 & -\frac{1}{\sqrt{35}} & 1 & 0 & 0 & 0 & \cdots & 0 & 0 & -\frac{1}{5} \\[3pt]
                         -\frac{1}{5} & 0 & 0 & \cdots & 0 & 0 & 0 & 1 & -\frac{1}{\sqrt{35}} & 0 & \cdots & 0 & 0 & 0 \\[3pt]
                         0 & -\frac{1}{7} & 0 & \cdots & 0 & 0 & 0 & -\frac{1}{\sqrt{35}} & 1 & -\frac{1}{7} & \cdots & 0 & 0 & 0 \\[3pt]
                         0 & 0 & -\frac{1}{7} & \cdots & 0 & 0 & 0 & 0 & -\frac{1}{7} & 1 & \cdots & 0 & 0 & 0 \\[3pt]
                         \vdots & \vdots & \vdots & \ddots & \vdots & \vdots & \vdots & \vdots & \vdots & \vdots & \ddots & \vdots & \vdots & \vdots \\[3pt]
                         0 & 0 & 0 & \cdots & -\frac{1}{7} & 0 & 0 & 0 & 0 & 0 & \cdots & 1 & -\frac{1}{7} & 0 \\[3pt]
                         0 & 0 & 0 & \cdots & 0 & -\frac{1}{7} & 0 & 0 & 0 & 0 & \cdots & -\frac{1}{7} & 1 & -\frac{1}{\sqrt{35}} \\[3pt]
                         0 & 0 & 0 & \cdots & 0 & 0 & -\frac{1}{5} & 0 & 0 & 0 & \cdots & 0 & -\frac{1}{\sqrt{35}} & 1
                       \end{array}
                     \right)}_{(2n+2)\times (2n+2)}
\end{align*}
and
\begin{align*}
&\mathcal{L}_{12}(B^2_n)\\
=&{\left(
                       \begin{array}{cccccccccccccc}
                         -\frac{1}{5} & -\frac{1}{\sqrt{35}} & 0 & \cdots & 0 & 0 & 0 & -\frac{1}{5} & 0 & 0 & \cdots & 0 & 0 & 0 \\[3pt]
                         -\frac{1}{\sqrt{35}} & -\frac{1}{7} & -\frac{1}{7} & \cdots & 0 & 0 & 0 & 0 & -\frac{1}{7} & 0 & \cdots & 0 & 0 & 0 \\[3pt]
                         0 & -\frac{1}{7} & -\frac{1}{7} & \cdots & 0 & 0 & 0 & 0 & 0 & -\frac{1}{7} & \cdots & 0 & 0 & 0 \\[3pt]
                         \vdots & \vdots & \vdots & \ddots & \vdots & \vdots & \vdots & \vdots & \vdots & \vdots & \ddots & \vdots & \vdots & \vdots \\[3pt]
                         0 & 0 & 0 & \cdots & -\frac{1}{7} & -\frac{1}{7} & 0 & 0 & 0 & 0 & \cdots & -\frac{1}{7} & 0 & 0 \\[3pt]
                         0 & 0 & 0 & \cdots & -\frac{1}{7} & -\frac{1}{7} & -\frac{1}{\sqrt{35}} & 0 & 0 & 0 & \cdots & 0 & -\frac{1}{7} & 0 \\[3pt]
                         0 & 0 & 0 & \cdots & 0 & -\frac{1}{\sqrt{35}} & -\frac{1}{5} & 0 & 0 & 0 & \cdots & 0 & 0 & -\frac{1}{5} \\[3pt]
                         -\frac{1}{5} & 0 & 0 & \cdots & 0 & 0 & 0 & -\frac{1}{5} & -\frac{1}{\sqrt{35}} & 0 & \cdots & 0 & 0 & 0 \\[3pt]
                         0 & -\frac{1}{7} & 0 & \cdots & 0 & 0 & 0 & -\frac{1}{\sqrt{35}} & -\frac{1}{7} & -\frac{1}{7} & \cdots & 0 & 0 & 0 \\[3pt]
                         0 & 0 & -\frac{1}{7} & \cdots & 0 & 0 & 0 & 0  & -\frac{1}{7} & -\frac{1}{7} & \cdots & 0 & 0 & 0 \\[3pt]
                         \vdots & \vdots & \vdots & \ddots & \vdots & \vdots & \vdots & \vdots & \vdots & \vdots & \ddots & \vdots & \vdots & \vdots \\[3pt]
                         0 & 0 & 0 & \cdots & -\frac{1}{7} & 0 & 0 & 0 & 0 & 0 & \cdots & -\frac{1}{7} & -\frac{1}{7} & 0 \\[3pt]
                         0 & 0 & 0 & \cdots & 0 & -\frac{1}{7} & 0 & 0 & 0 & 0 & \cdots & -\frac{1}{7} & -\frac{1}{7} & -\frac{1}{\sqrt{35}} \\[3pt]
                         0 & 0 & 0 & \cdots & 0 & 0 & -\frac{1}{5} & 0 & 0 & 0 & \cdots & 0 & -\frac{1}{\sqrt{35}} & -\frac{1}{5}
                       \end{array}
                     \right)}_{(2n+2)\times (2n+2)}.
\end{align*}
Since $\mathcal{L}_{A}=\mathcal{L}_{11}(B^2_n)+\mathcal{L}_{12}(B^2_n)$ and $\mathcal{L}_{S}=\mathcal{L}_{11}(B^2_n)-\mathcal{L}_{12}(B^2_n)$, thus
\begin{align*}
&\mathcal{L}_{A}\\
=&2{\left(
                       \begin{array}{cccccccccccccc}
                         \frac{2}{5} & -\frac{1}{\sqrt{35}} & 0 & \cdots & 0 & 0 & 0 & -\frac{1}{5} & 0 & 0 & \cdots & 0 & 0 & 0 \\[3pt]
                         -\frac{1}{\sqrt{35}} & \frac{3}{7} & -\frac{1}{7} & \cdots & 0 & 0 & 0 & 0 & -\frac{1}{7} & 0 & \cdots & 0 & 0 & 0 \\[3pt]
                         0 & -\frac{1}{7} & \frac{3}{7} & \cdots & 0 & 0 & 0 & 0 & 0 & -\frac{1}{7} & \cdots & 0 & 0 & 0 \\[3pt]
                         \vdots & \vdots & \vdots & \ddots & \vdots & \vdots & \vdots & \vdots & \vdots & \vdots & \ddots & \vdots & \vdots & \vdots \\[3pt]
                         0 & 0 & 0 & \cdots &  \frac{3}{7} & -\frac{1}{7} & 0 & 0 & 0 & 0 & \cdots & -\frac{1}{7} & 0 & 0 \\[3pt]
                         0 & 0 & 0 & \cdots & -\frac{1}{7} & \frac{3}{7} & -\frac{1}{\sqrt{35}} & 0 & 0 & 0 & \cdots & 0 & -\frac{1}{7} & 0 \\[3pt]
                         0 & 0 & 0 & \cdots & 0 & -\frac{1}{\sqrt{35}} &  \frac{2}{5} & 0 & 0 & 0 & \cdots & 0 & 0 & -\frac{1}{5} \\[3pt]
                         -\frac{1}{5} & 0 & 0 & \cdots & 0 & 0 & 0 & \frac{2}{5} & -\frac{1}{\sqrt{35}} & 0 & \cdots & 0 & 0 & 0 \\[3pt]
                         0 & -\frac{1}{7} & 0 & \cdots & 0 & 0 & 0 & -\frac{1}{\sqrt{35}} & \frac{3}{7} & -\frac{1}{7} & \cdots & 0 & 0 & 0 \\[3pt]
                         0 & 0 & -\frac{1}{7} & \cdots & 0 & 0 & 0 & 0 & -\frac{1}{7} & \frac{3}{7} & \cdots & 0 & 0 & 0 \\[3pt]
                         \vdots & \vdots & \vdots & \ddots & \vdots & \vdots & \vdots & \vdots & \vdots & \vdots & \ddots & \vdots & \vdots & \vdots \\[3pt]
                         0 & 0 & 0 & \cdots & -\frac{1}{7} & 0 & 0 & 0 & 0 & 0 & \cdots & \frac{3}{7} & -\frac{1}{7} & 0 \\[3pt]
                         0 & 0 & 0 & \cdots & 0 & -\frac{1}{7} & 0 & 0 & 0 & 0 & \cdots & -\frac{1}{7} & \frac{3}{7} & -\frac{1}{\sqrt{35}} \\[3pt]
                         0 & 0 & 0 & \cdots & 0 & 0 & -\frac{1}{5} & 0 & 0 & 0 & \cdots & 0 & -\frac{1}{\sqrt{35}} & \frac{2}{5}
                       \end{array}
                     \right)}_{(2n+2)\times (2n+2)}
\end{align*}
and
$$
\mathcal{L}_{S}={\left(
                       \begin{array}{cccccccccccccc}
                         \frac{6}{5} & 0 & 0 & \cdots & 0 & 0 & 0 & 0 & 0 & 0 & \cdots & 0 & 0 & 0 \\[3pt]
                         0 & \frac{8}{7} & 0 & \cdots & 0 & 0 & 0 & 0 & 0 & 0 & \cdots & 0 & 0 & 0 \\[3pt]
                         0 & 0 & \frac{8}{7} & \cdots & 0 & 0 & 0 & 0 & 0 & 0 & \cdots & 0 & 0 & 0 \\[3pt]
                         \vdots & \vdots & \vdots & \ddots & \vdots & \vdots & \vdots & \vdots & \vdots & \ddots & \vdots & \vdots & \vdots \\[3pt]
                         0 & 0 & 0 & \cdots & \frac{8}{7} & 0 & 0 & 0 & 0 & 0 & \cdots & 0 & 0 & 0 \\[3pt]
                         0 & 0 & 0 & \cdots & 0 & \frac{8}{7} & 0 & 0 & 0 & 0 & \cdots & 0 & 0 & 0 \\[3pt]
                         0 & 0 & 0 & \cdots & 0 & 0 & \frac{6}{5} & 0 & 0 & 0 & \cdots & 0 & 0 & 0 \\[3pt]
                         0 & 0 & 0 & \cdots & 0 & 0 & 0 & \frac{6}{5} & 0 & 0 & \cdots & 0 & 0 & 0 \\[3pt]
                         0 & 0 & 0 & \cdots & 0 & 0 & 0 & 0 & \frac{8}{7} & 0 & \cdots & 0 & 0 & 0 \\[3pt]
                         0 & 0 & 0 & \cdots & 0 & 0 & 0 & 0 & 0 & \frac{8}{7} & \cdots & 0 & 0 & 0 \\[3pt]
                         \vdots & \vdots & \vdots & \ddots & \vdots & \vdots & \vdots & \vdots & \vdots & \vdots & \ddots & \vdots & \vdots & \vdots \\[3pt]
                         0 & 0 & 0 & \cdots & 0 & 0 & 0 & 0 & 0 & 0 & \cdots & \frac{8}{7} & 0 & 0\\[3pt]
                         0 & 0 & 0 & \cdots & 0 & 0 & 0 & 0 & 0 & 0 & \cdots & 0 & \frac{8}{7} & 0 \\[3pt]
                         0 & 0 & 0 & \cdots & 0 & 0 & 0 & 0 & 0 & 0 & \cdots & 0 & 0 & \frac{6}{5}
                       \end{array}
                     \right)}_{(2n+2)\times (2n+2)}.
$$
Therefore, by Theorem 2.1, we have the normalized Laplacian eigenvalues of $B^2_n$ consists of the eigenvalues of $\mathcal{L}_A$ and $\mathcal{L}_S$. Since $\mathcal{L}_{S}$ is a diagonal matrix, one can easily see that $\frac{6}{5}$ with multiplicity 4 and $\frac{8}{7}$ with multiplicity $2n-2$ are all the eigenvalues of $\mathcal{L}_S$. Next, we provide a complete description of the product of the eigenvalues of $\mathcal{L}_A$ and the sum of the reciprocals of the eigenvalues of $\mathcal{L}_A$ which will be used in computing the degree-Kirchhoff index and the number of spanning trees of $B^2_n$.

Let
\begin{align*}
C={\left(
         \begin{array}{ccccccc}
         \frac{2}{5} & -\frac{1}{\sqrt{35}} & 0 & \cdots & 0 & 0 & 0 \\[3pt]
         -\frac{1}{\sqrt{35}} & \frac{3}{7} & -\frac{1}{7} & \cdots & 0 & 0 & 0\\[3pt]
         0 & -\frac{1}{7} & \frac{3}{7} & \cdots & 0 & 0 & 0 \\[3pt]
         \vdots & \vdots & \vdots & \ddots & \vdots & \vdots & \vdots \\[3pt]
         0 & 0 & 0 & \cdots &  \frac{3}{7} & -\frac{1}{7} & 0 \\[3pt]
         0 & 0 & 0 & \cdots & -\frac{1}{7} & \frac{3}{7} & -\frac{1}{\sqrt{35}} \\[3pt]
         0 & 0 & 0 & \cdots & 0 & -\frac{1}{\sqrt{35}} &  \frac{2}{5}
         \end{array}
         \right)}_{(n+1)\times (n+1)}
\end{align*}
and
\begin{align*}
D={\left(
        \begin{array}{ccccccc}
        -\frac{1}{5} & 0 & 0 & \cdots & 0 & 0 & 0 \\[3pt]
        0 & -\frac{1}{7} & 0 & \cdots & 0 & 0 & 0 \\[3pt]
        0 & 0 & -\frac{1}{7} & \cdots & 0 & 0 & 0 \\[3pt]
        \vdots & \vdots & \vdots & \ddots & \vdots & \vdots & \vdots \\[3pt]
        0 & 0 & 0 & \cdots & -\frac{1}{7} & 0 & 0 \\[3pt]
        0 & 0 & 0 & \cdots & 0 & -\frac{1}{7} & 0 \\[3pt]
        0 & 0 & 0 & \cdots & 0 & 0 & -\frac{1}{5}
        \end{array}
        \right)}_{(n+1)\times (n+1)}.
\end{align*}
Then
$\frac{1}{2}\mathcal{L}_A$ can be written as the following block matrix
$$
\frac{1}{2}\mathcal{L}_A=
\left(
\begin{tabular}{c|c}
\text{$\begin{array}{c}
      C \\
    \end{array}$}
&\text{$\begin{array}{c}
       D\\
       \end{array}$}
\\\hline
\text{$\begin{array}{c}
      D\\
    \end{array}$}
&\text{$\begin{array}{c}
      C \\
       \end{array}$}
\end{tabular}
\right).
$$
Let
$$
T=\left(
\begin{tabular}{c|c}
\text{$\begin{array}{c}
      \frac{1}{\sqrt{2}}I_{n+1} \\
    \end{array}$}
&\text{$\begin{array}{c}
         \frac{1}{\sqrt{2}}I_{n+1}\\
       \end{array}$}
\\\hline
\text{$\begin{array}{c}
       \frac{1}{\sqrt{2}}I_{n+1}  \\
    \end{array}$}
&\text{$\begin{array}{c}
        -\frac{1}{\sqrt{2}}I_{n+1}\\
       \end{array}$}
\end{tabular}
\right)
$$
be the block matrix such that the blocks have the same dimension as the corresponding blocks in $\frac{1}{2}\mathcal{L}_A$. Then
\begin{eqnarray*}
T\big(\frac{1}{2}\mathcal{L}_A\big)T=\left(
\begin{tabular}{c|c}
\text{$\begin{array}{c}
      C+D \\
    \end{array}$}
&\text{$\begin{array}{c}
          \bf{0}\\
       \end{array}$}
\\\hline
\text{$\begin{array}{c}
        \bf{0}  \\
    \end{array}$}
&\text{$\begin{array}{c}
        C-D\\
       \end{array}$}
\end{tabular}
\right).
\end{eqnarray*}
Let $M=C+D$ and $N=C-D$. It is easy to check that the eigenvalues of $\frac{1}{2}\mathcal{L}_A$ consists of the eigenvalues of $M$ and $N$. Suppose that the eigenvalues of $M$ and $N$ are respectively, denoted by $\alpha_i$ and $\beta_j\ (i,j=1,2,\ldots,n+1)$ with $\alpha_1\leqslant\alpha_2\leqslant\ldots\leqslant\alpha_{n+1}, \beta_1\leqslant\beta_2\leqslant\ldots\leqslant\beta_{n+1}$. Then the eigenvalues of $\mathcal{L}_A$ are $2\alpha_1,2\alpha_2,\ldots,2\alpha_{n+1},2\beta_1,2\beta_2,\ldots,2\beta_{n+1}$. Since all the eigenvalues of $\mathcal{L}_A$ are the normalized Laplacian eigenvalues of $B^2_n$, we get $\alpha_1, \beta_1\geqslant0$.

Let $\xi=(\sqrt{5},\sqrt{7},\dots,\sqrt{7},\sqrt{5})^T$, then we have $M\xi=0$, thus $\alpha_1=0$, and $\alpha_i>0,  \beta_j>0$ for $2\leqslant i\leqslant n+1, 1\leqslant j\leqslant n+1.$ Note that $|E_{B^2_n}|=2(7n+3)$, the following lemma is an immediate consequence of Lemma 2.3.
\begin{lem}
Let $B^2_n$ be the strong prism of a linear polyomino chain with $n$ squares. Let
\begin{align*}
M={\left(
         \begin{array}{ccccccc}
         \frac{1}{5} & -\frac{1}{\sqrt{35}} & 0 & \cdots & 0 & 0 & 0 \\[3pt]
         -\frac{1}{\sqrt{35}} & \frac{2}{7} & -\frac{1}{7} & \cdots & 0 & 0 & 0\\[3pt]
         0 & -\frac{1}{7} & \frac{2}{7} & \cdots & 0 & 0 & 0 \\[3pt]
         \vdots & \vdots & \vdots & \ddots & \vdots & \vdots & \vdots \\[3pt]
         0 & 0 & 0 & \cdots &  \frac{2}{7} & -\frac{1}{7} & 0 \\[3pt]
         0 & 0 & 0 & \cdots & -\frac{1}{7} & \frac{2}{7} & -\frac{1}{\sqrt{35}} \\[3pt]
         0 & 0 & 0 & \cdots & 0 & -\frac{1}{\sqrt{35}} &  \frac{1}{5}
         \end{array}
         \right)}_{(n+1)\times (n+1)}
\end{align*}
and
\begin{align*}
N={\left(
         \begin{array}{ccccccc}
         \frac{3}{5} & -\frac{1}{\sqrt{35}} & 0 & \cdots & 0 & 0 & 0 \\[3pt]
         -\frac{1}{\sqrt{35}} & \frac{4}{7} & -\frac{1}{7} & \cdots & 0 & 0 & 0\\[3pt]
         0 & -\frac{1}{7} & \frac{4}{7} & \cdots & 0 & 0 & 0 \\[3pt]
         \vdots & \vdots & \vdots & \ddots & \vdots & \vdots & \vdots \\[3pt]
         0 & 0 & 0 & \cdots &  \frac{4}{7} & -\frac{1}{7} & 0 \\[3pt]
         0 & 0 & 0 & \cdots & -\frac{1}{7} & \frac{4}{7} & -\frac{1}{\sqrt{35}} \\[3pt]
         0 & 0 & 0 & \cdots & 0 & -\frac{1}{\sqrt{35}} &  \frac{3}{5}
         \end{array}
         \right)}_{(n+1)\times (n+1)}.
\end{align*}
Then
\begin{eqnarray*}
Kf^*(B^2_n)=4(7n+3)\left(4\times\frac{5}{6}+(2n-2)\frac{7}{8}+\frac{1}{2}\sum_{i=2}^{n+1}\frac{1}{\alpha_i}+\frac{1}{2}\sum_{j=1}^{n+1}\frac{1}{\beta_j}\right),
\end{eqnarray*}
where $0=\alpha_1<\alpha_2\leqslant\ldots\leqslant\alpha_{n+1}, 0<\beta_1\leqslant\beta_2\leqslant\ldots\leqslant\beta_{n+1}$ are the eigenvalues of $M$ and $N$, respectively.
\end{lem}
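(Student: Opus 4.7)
The proof is essentially a consolidation of the eigenvalue analysis already carried out above the statement. By Theorem 2.1, the $4n+4$ normalized Laplacian eigenvalues of $B^2_n$ split as the disjoint union of the spectra of $\mathcal{L}_A$ and $\mathcal{L}_S$. Since $\mathcal{L}_S$ has been exhibited as an explicitly diagonal matrix, its eigenvalues are $\frac{6}{5}$ with multiplicity $4$ and $\frac{8}{7}$ with multiplicity $2n-2$, contributing $4\cdot\frac{5}{6}+(2n-2)\cdot\frac{7}{8}$ to $\sum_{k=2}^{4n+4}\frac{1}{\lambda_k}$.

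For $\mathcal{L}_A$, I would invoke the orthogonal similarity $T\big(\tfrac{1}{2}\mathcal{L}_A\big)T=\mathrm{diag}(M,N)$ verified just above, which shows that the eigenvalues of $\mathcal{L}_A$ are $\{2\alpha_i\}_{i=1}^{n+1}\cup\{2\beta_j\}_{j=1}^{n+1}$. The unique zero eigenvalue of $\mathcal{L}(B^2_n)$, guaranteed by the connectedness of $B^2_n$, must be $2\alpha_1=0$; this is certified by the explicit kernel vector $\xi=(\sqrt{5},\sqrt{7},\dots,\sqrt{7},\sqrt{5})^T$ satisfying $M\xi=0$, which reduces to a routine row-by-row check on the tridiagonal matrix $M$. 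Since the combined multiplicities already account for all $4n+4$ eigenvalues, the remaining $\alpha_i$ and $\beta_j$ must be strictly positive, and their reciprocals contribute $\sum_{i=2}^{n+1}\frac{1}{2\alpha_i}$ and $\sum_{j=1}^{n+1}\frac{1}{2\beta_j}$ to the reciprocal sum.

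Finally, I would apply Lemma 2.3 with $2m=2\cdot 2(7n+3)=4(7n+3)$ and collect the four reciprocal contributions to obtain
$$
Kf^*(B^2_n)=4(7n+3)\left(4\cdot\frac{5}{6}+(2n-2)\cdot\frac{7}{8}+\frac{1}{2}\sum_{i=2}^{n+1}\frac{1}{\alpha_i}+\frac{1}{2}\sum_{j=1}^{n+1}\frac{1}{\beta_j}\right),
$$
which is the claimed identity. There is no genuine obstacle in this proof: the spectral decomposition is already in place, so the only point requiring care is the bookkeeping of multiplicities, i.e.\ ensuring that exactly one zero is excluded from the sum of reciprocals and that each of the remaining $4n+3$ eigenvalues is counted precisely once.
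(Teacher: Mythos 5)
Your proposal is correct and follows essentially the same route as the paper, which presents this lemma as an immediate consequence of the preceding spectral decomposition: the split of the spectrum via Theorem 2.1, the diagonal $\mathcal{L}_S$ contributing $4\cdot\frac{5}{6}+(2n-2)\cdot\frac{7}{8}$, the similarity $T\bigl(\tfrac{1}{2}\mathcal{L}_A\bigr)T=\mathrm{diag}(M,N)$ giving eigenvalues $2\alpha_i$ and $2\beta_j$, the kernel vector $\xi=(\sqrt{5},\sqrt{7},\dots,\sqrt{7},\sqrt{5})^T$ identifying the unique zero eigenvalue as $2\alpha_1$, and Lemma 2.3 with $2m=4(7n+3)$. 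Your bookkeeping of multiplicities and the exclusion of exactly one zero eigenvalue match the paper's argument precisely.
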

Based on the relationship between the roots and coefficients of $\Phi(M)$ (resp. $\Phi(N)$), the formula of $\sum_{i=2}^{n+1}\frac{1}{\alpha_i}$ (resp. $\sum_{j=1}^{n+1}\frac{1}{\beta_j}$) is derived in the next two lemmas.
\begin{lem} Let $\alpha_i\, (i=1,2,\ldots,n+1)$ be defined as above. Then
\begin{eqnarray*}
\sum_{i=2}^{n+1}\frac{1}{\alpha_i}=\frac{49n^3+63n^2+38n}{6(7n+3)}.
\end{eqnarray*}
\end{lem}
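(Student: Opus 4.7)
The plan is to apply Vieta's formulas to the characteristic polynomial $\Phi(M;x)=\det(xI-M)$. Since $M\xi=0$ with $\xi=(\sqrt{5},\sqrt{7},\ldots,\sqrt{7},\sqrt{5})^{T}$, zero is a simple eigenvalue and $\Phi(M;x)=x\,Q(x)$ where $Q(x)=\prod_{i=2}^{n+1}(x-\alpha_i)$. Logarithmic differentiation at $x=0$ gives
\[
\sum_{i=2}^{n+1}\frac{1}{\alpha_i}=-\frac{Q'(0)}{Q(0)}=\frac{S_{n-1}}{S_n},
\]
where $S_k$ denotes the sum of all $k\times k$ principal minors of $M$ (using that the coefficient of $x^k$ in $\Phi(M;x)$ equals $(-1)^{n+1-k}S_{n+1-k}$). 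Thus it suffices to evaluate $S_n$ and $S_{n-1}$.

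Because $M$ is tridiagonal, deleting the row and column indexed by $k$ (respectively, by $k<\ell$) splits $M$ into a block-diagonal matrix whose determinant is the product of two (resp.\ three) smaller tridiagonal determinants. Accordingly,
\[
S_n=\sum_{k=1}^{n+1}A_{k-1}B_{n+1-k},\qquad S_{n-1}=\sum_{1\le k<\ell\le n+1}A_{k-1}\,C_{\ell-k-1}\,B_{n+1-\ell},
\]
where $A_j$ and $B_j$ are the determinants of the leading and trailing $j\times j$ principal submatrices of $M$, and $C_j$ is the determinant of a $j\times j$ tridiagonal block with constant diagonal $2/7$ and constant off-diagonal $-1/7$. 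All three sequences satisfy three-term recurrences from cofactor expansion; for $C_j$ the constant-coefficient recurrence has a characteristic polynomial with a \emph{double} root, so $C_j$ equals a degree-$1$ polynomial in $j$ times $(1/7)^j$, and the boundary entries $1/5$ and $-1/\sqrt{35}$ introduce only bounded correction terms in $A_j$ and $B_j$.

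With $A_j, B_j, C_j$ in closed form, $S_n$ becomes a single-index sum and $S_{n-1}$ a double sum which I expect to evaluate by elementary identities for $\sum k$, $\sum k^2$, and $\sum k(n-k)$. After the common factor of $(1/7)^{\ast}$ cancels in the ratio $S_{n-1}/S_n$, the expression should collapse to the claimed $\frac{49n^3+63n^2+38n}{6(7n+3)}$. The main obstacle is Step~3 bookkeeping: the two atypical boundary entries $1/5$ and $-1/\sqrt{35}$ break the uniform tridiagonal pattern at both ends of $M$, so one must track how these corrections propagate through the sums, verify that the irrational $\sqrt{35}$ contributions cancel (as they must, since both $S_n$ and $S_{n-1}$ are rational), and telescope the double sum defining $S_{n-1}$ without sign errors so that the denominator collapses to exactly $7n+3=\tfrac{1}{2}|E(B_n^{2})|$.
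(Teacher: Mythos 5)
Your proposal follows essentially the same route as the paper's proof: your identity $\sum_{i=2}^{n+1}\alpha_i^{-1}=S_{n-1}/S_n$ is the paper's Vieta step, your $A_j=B_j$ are its leading principal minors $m_j=\frac{1}{5}\left(\frac{1}{7}\right)^{j-1}$ obtained from the same double-root recurrence, and your block splittings of $S_n$ and $S_{n-1}$ with interior determinant $C_j=(j+1)\left(\frac{1}{7}\right)^{j}$ are exactly its Claims 1--3. As reassurance on your flagged obstacle: the boundary entries $-1/\sqrt{35}$ enter every cofactor expansion only through their square $\frac{1}{35}$, so no irrational terms ever appear and the bookkeeping is lighter than you fear.
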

\begin{proof}
Suppose that $\Phi(M)=x^{n+1}+a_1x^n+\cdots+a_{n-1}x^2+a_nx=x(x^n+a_1x^{n-1}+\cdots+a_{n-1}x+a_n).$ Then $\alpha_i\, (i=2,3,\ldots,n+1)$ satisfies the following equation
$$
x^n+a_1x^{n-1}+\cdots+a_{n-1}x+a_n=0.
$$
and so $\frac{1}{\alpha_i}\, (i=2,3,\ldots,n+1)$ satisfies the following equation
$$
a_nx^n+a_{n-1}x^{n-1}+\cdots+a_1x+1=0.
$$
By Vieta's Theorem, we have
\begin{eqnarray}\label{eq:3.1}
\sum_{i=2}^{n+1}\frac{1}{\alpha_i}=\frac{(-1)^{n-1}a_{n-1}}{(-1)^na_n}.
\end{eqnarray}
For $1\leqslant i\leqslant n$, let $M_i$ be the $i$th order principal submatrix formed by the first $i$ rows and columns of $M$ and $m_i:=\det M_i$. We first derive the formula of $m_i$, which will be used in calculating $(-1)^{n-1}a_{n-1}$ and $(-1)^na_n$.
\begin{claim}
For $1\leqslant i\leqslant n, m_i=\frac{1}{5}\cdot\left(\frac{1}{7}\right)^{i-1}$.
\end{claim}
\noindent{\bf Proof of Claim 1.}\
It is easy to know that $m_1=\frac{1}{5}, m_2=\frac{1}{35}.$ For $3\leqslant i\leqslant n$, expanding $\det M_i$ with regard to its last row, we have
$$
m_i=\frac{2}{7}m_{i-1}-\frac{1}{49}m_{i-2}.
$$
Then the characteristic equation of $\{m_i\}_{i\geqslant1}$ is $x^2=\frac{2}{7}x-\frac{1}{49}$, whose roots are $x_1=x_2=\frac{1}{7}$. Suppose that
\begin{eqnarray}\label{eq:3.2}
m_i=(y_1i+y_2)\left(\frac{1}{7}\right)^i,
\end{eqnarray}
then the initial conditions, $m_1=\frac{1}{5}$ and $m_2=\frac{1}{35},$ lead to the system of equations
\begin{eqnarray*}
\left\{
  \begin{array}{l}
    \displaystyle \frac{1}{7}(y_1+y_2)=\frac{1}{5}, \\[6pt]
    \displaystyle \frac{1}{49}(2y_1+y_2)=\frac{1}{35}.
  \end{array}
\right.
\end{eqnarray*}
Solving it, we get $y_1=0, y_2=\frac{7}{5}.$ Thus Claim 1 follows by substituting $y_1$ and $y_2$ back into (\ref{eq:3.2}), as desired.\\

Then based on Claim 1, we give the expressions of  $(-1)^na_n$ and $(-1)^{n-1}a_{n-1}$ by the following two claims, respectively. For the sake of convenience, we let the diagonal entries of $M$ be $k_{ii}$ and $m_0$ be 1.
\begin{claim}
$(-1)^na_n=\frac{7n+3}{25}\cdot\left(\frac{1}{7}\right)^{n-1}.$
\end{claim}
\noindent{\bf Proof of Claim 2.}\
Since the number $(-1)^{n}a_{n}$ is the sum of those principal minors of $M$ which have $n$ rows and columns, we have
\begin{eqnarray}
  (-1)^{n}a_{n}
            &=&\sum_{i=1}^{n+1}\left|\begin{array}{cccccccc}
                                k_{11} & -\frac{1}{\sqrt{35}} & \cdots & 0 & 0& 0 & 0 & 0 \\[3pt]
                                -\frac{1}{\sqrt{35}} & k_{22} & \cdots & 0 & 0 & 0 & 0& 0\\[3pt]
                                \vdots & \vdots & \ddots & \vdots & \vdots & \vdots & \vdots & \vdots \\[3pt]
                                0 & 0 & \cdots & k_{i-1,i-1} & 0 & 0 & \cdots & 0 \\[3pt]
                                0 & 0 & \cdots & 0 & k_{i+1,i+1} & -\frac{1}{7} & \cdots & 0 \\[3pt]
                               \vdots & \vdots & \cdots & \vdots & \vdots & \ddots & \vdots & \vdots \\[3pt]
                               0 & 0 & \cdots & 0 & 0 & \cdots & k_{n,n}&-\frac{1}{\sqrt{35}}\\[3pt]
                                0 & 0 & \cdots & 0 & 0 & \cdots & -\frac{1}{\sqrt{35}} &k_{n+1,n+1}
                              \end{array}\right|\notag\\[3pt]
      &=&\sum_{i=1}^{n+1} \left(\left|\begin{array}{cccc}
                                    k_{11} & -\frac{1}{\sqrt{35}} & \cdots & 0 \\[3pt]
                                    -\frac{1}{\sqrt{35}} & k_{22} & \cdots & 0 \\[3pt]
                                    \vdots & \vdots & \ddots & \vdots\\[3pt]
                                    0 & 0 & \cdots & k_{i-1,i-1}
                                  \end{array}\right|\left|\begin{array}{cccc}
                                                                 k_{i+1,i+1} & -\frac{1}{7} & \cdots & 0 \\[3pt]
                                                                \vdots & \ddots & \vdots & \vdots  \\[3pt]
                                                                0 & \cdots & k_{n,n}&-\frac{1}{\sqrt{35}} \\[3pt]
                                                                 0 & \cdots & -\frac{1}{\sqrt{35}} &k_{n+1,n+1}
                                                               \end{array}
                                                               \right|\right).\label{eq:3.03}
\end{eqnarray}
Note that a permutation similarity transformation of a square matrix preserves its determinant. Together with the property of $M$, the right hand side of (\ref{eq:3.03}) is equal to $\det M_{n+1-i}$. By Claim 1, we have
\begin{eqnarray*}
(-1)^{n}a_{n}&=&\sum_{i=1}^{n+1}m_{i-1}m_{n+1-i}=2m_{n}+\sum_{i=2}^{n}m_{i-1}m_{n+1-i}\\
&=&2\cdot\frac{1}{5}\left(\frac{1}{7}\right)^{n-1}+\sum_{i=2}^{n}\frac{1}{5}\left(\frac{1}{7}\right)^{i-2}\cdot\frac{1}{5}\left(\frac{1}{7}\right)^{n-i}\\
&=&\frac{7n+3}{25}\cdot\left(\frac{1}{7}\right)^{n-1}.
\end{eqnarray*}
This completes the proof of Claim 2.
\begin{claim}
$
(-1)^{n-1}a_{n-1}=\frac{49n^3+63n^2+38n}{150}\cdot\left(\frac{1}{7}\right)^{n-1}.
$
\end{claim}
\noindent{\bf Proof of Claim 3.}\
Note that the number $(-1)^{n-1}a_{n-1}$ is the sum of those principal minors of $M$ which have $(n-1)$ rows and columns, hence  $(-1)^{n-1}a_{n-1}$ equals
\begin{align*}
&\sum_{1\leqslant i<j\leqslant n+1}\left|\begin{array}{cccccccccccc}
                                                            k_{11} & \frac{-1}{\sqrt{35}} & \cdots & 0 & 0 & 0 & \cdots & 0 & 0 & \cdots & 0 & 0 \\[3pt]
                                                            \frac{-1}{\sqrt{35}} & k_{22} & \cdots & 0 & 0 & 0 & \cdots& 0 & 0 & \cdots& 0 & 0 \\[3pt]
                                                            \vdots &\vdots&\ddots&\vdots&\vdots &\vdots &\cdots& \vdots & \vdots &\cdots &\vdots&\vdots \\[3pt]
                                                            0 & 0&\cdots&k_{i-1,i-1}&0 &0&\cdots &0& 0 &\cdots&0 & 0 \\[3pt]
                                                            0&0&\cdots&0&k_{i+1,i+1}&-\frac{1}{7}&\cdots&0& 0&\cdots& 0 & 0 \\[3pt]
                                                            0& 0& \cdots& 0 &-\frac{1}{7}& k_{i+2,i+2}&\cdots& 0 &0& \cdots&0 & 0\\[3pt]
                                                            \vdots&\vdots&\cdots&\vdots&\vdots&\vdots& \ddots&\vdots&\vdots&\cdots&\vdots &\vdots\\[3pt]
                                                            0& 0 & \cdots& 0& 0& 0 &\cdots& k_{j-1,j-1} & 0 &\cdots & 0 & 0 \\[3pt]
                                                            0 & 0 & \cdots& 0 & 0& 0& \cdots& 0&k_{j+1,j+1}&\cdots& 0& 0\\[3pt]
                                                            \vdots&\vdots&\cdots&\vdots&\vdots& \vdots&\cdots &\vdots&\vdots&\ddots&\vdots&\vdots\\[3pt]
                                                            0 & 0 & \cdots& 0& 0& 0& \cdots&0 & 0& \cdots& k_{n,n}& \frac{-1}{\sqrt{35}}\\[3pt]
                                                            0 &0& \cdots & 0& 0 & 0& \cdots& 0 & 0&\cdots&\frac{-1}{\sqrt{35}}& k_{n+1,n+1}
                                                          \end{array}
    \right|\\
\end{align*}
\begin{align}
=&\sum_{1\leqslant i<j\leqslant n+1}\left(\left|\begin{array}{cccc}
                                                     k_{11} & -\frac{1}{\sqrt{35}} & \cdots & 0 \\[3pt]
                                                     -\frac{1}{\sqrt{35}} & k_{22} & \cdots & 0 \\[3pt]
                                                     \vdots &\vdots&\ddots&\vdots\\[3pt]
                                                     0 & 0 &\cdots & k_{i-1,i-1}
                                                   \end{array}\right|
\left|\begin{array}{rrrr}
             k_{i+1,i+1}&-\frac{1}{7}&\cdots&0 \\[3pt]
            -\frac{1}{7}& k_{i+2,i+2}&\cdots& 0  \\[3pt]
             \vdots&\vdots& \ddots&\vdots\\[3pt]
             0& 0 &\cdots& k_{j-1,j-1}
           \end{array}\right|\right.\notag\\[3pt]
&\times\left.\left|\begin{array}{rrrr}
             k_{j+1,j+1}&\cdots& 0& 0\\[3pt]
             \vdots&\ddots&\vdots&\vdots \\[3pt]
              0& \cdots& k_{n,n}& -\frac{1}{\sqrt{35}}\\[3pt]
              0&\cdots&-\frac{1}{\sqrt{35}}& k_{n+1,n+1}
           \end{array}\right|\right).\label{eq:3.04}
\end{align}
Note that a permutation similarity transformation of a square matrix preserves its determinant. Hence, together with the property of $M$, the right hand side of (\ref{eq:3.04}) is equal to $\det M_{n+1-j}$. Thus,
\begin{eqnarray}\label{eq:3.3}
(-1)^{n-1}a_{n-1}=\sum_{1\leqslant i<j\leqslant n+1}m_{i-1}m_{n+1-j}\cdot\det P,
\end{eqnarray}
where
$$
P=\left(
    \begin{array}{rrrrrr}
      \frac{2}{7} & -\frac{1}{7} & 0 & \cdots & 0 & 0 \\[3pt]
      -\frac{1}{7} & \frac{2}{7} & -\frac{1}{7} & \cdots & 0 & 0 \\[3pt]
      0 & -\frac{1}{7} & \frac{2}{7} & \cdots & 0 & 0 \\[3pt]
      \vdots & \vdots & \vdots & \ddots & \vdots & \vdots \\[3pt]
      0 & 0 & 0 & \cdots & \frac{2}{7} & -\frac{1}{7} \\[3pt]
      0 & 0 & 0 & \cdots & -\frac{1}{7} &  \frac{2}{7}
    \end{array}
  \right)_{(j-i-1)\times(j-i-1)}.
$$
Straightforward computation yields that $\det P=\left(\frac{1}{7}\right)^{j-i-1}(j-i)$. Thus in view of (\ref{eq:3.3}), we have
\begin{eqnarray*}
(-1)^{n-1}a_{n-1}&=&\sum_{1\leqslant i<j\leqslant n+1}\left(\frac{1}{7}\right)^{j-i-1}(j-i)\cdot m_{i-1}m_{n+1-j}
\end{eqnarray*}
\begin{eqnarray*}
&=&\sum_{i=2}^{n}\left(\frac{1}{7}\right)^{n-i}(n+1-i)\cdot m_{i-1}+\sum_{j=2}^{n}\left(\frac{1}{7}\right)^{j-2}(j-1)\cdot m_{n+1-j}\\
&&+\sum_{2\leqslant i<j\leqslant n}\left(\frac{1}{7}\right)^{j-i-1}(j-i)\cdot m_{i-1}m_{n+1-j}+n\left(\frac{1}{7}\right)^{n-1}\\
&=&2\sum_{i=2}^{n}\left(\frac{1}{7}\right)^{n-i}(n+1-i)\cdot m_{i-1}+\sum_{2\leqslant i<j\leqslant n}\left(\frac{1}{7}\right)^{j-i-1}(j-i)\cdot m_{i-1}m_{n+1-j}+n\left(\frac{1}{7}\right)^{n-1}.
\end{eqnarray*}
By Claim 1, we have
$$
\sum_{i=2}^{n}\left(\frac{1}{7}\right)^{n-i}(n+1-i)\cdot m_{i-1}=\sum_{i=2}^{n}\left(\frac{1}{7}\right)^{n-i}\left(\frac{1}{7}\right)^{i-2}\frac{n+1-i}{5}
=\frac{n(n-1)}{10}\left(\frac{1}{7}\right)^{n-2}
$$
and
\begin{eqnarray*}
\sum_{2\leqslant i<j\leqslant n}\left(\frac{1}{7}\right)^{j-i-1}(j-i)\cdot m_{i-1}m_{n+1-j}
&=&\sum_{2\leqslant i<j\leqslant n}\left(\frac{1}{7}\right)^{j-i-1}\left(\frac{1}{7}\right)^{i-2}\left(\frac{1}{7}\right)^{n-j}\frac{j-i}{25}\\
&=&\frac{1}{25}\left(\frac{1}{7}\right)^{n-3}\sum_{2\leqslant i<j\leqslant n}(j-i)\\
&=&\frac{n(n-1)(n-2)}{150}\left(\frac{1}{7}\right)^{n-3}.
\end{eqnarray*}
Therefore,
\begin{eqnarray*}
(-1)^{n-1}a_{n-1}&=&\frac{n(n-1)}{5}\left(\frac{1}{7}\right)^{n-2}+\frac{n(n-1)(n-2)}{150}\left(\frac{1}{7}\right)^{n-3}+n\left(\frac{1}{7}\right)^{n-1}
\\&=&\frac{49n^3+63n^2+38n}{150}\cdot\left(\frac{1}{7}\right)^{n-1}.
\end{eqnarray*}
This completes the proof of Claim 3.

Substituting Claims 2-3 into (\ref{eq:3.1}) yields
$
\sum_{i=2}^{n+1}\frac{1}{\alpha_i}=\frac{49n^3+63n^2+38n}{6(7n+3)}$,
as desired.
\end{proof}

\begin{lem} Let $\beta_j\, (1\leqslant j\leqslant n+1)$ denote the eigenvalue of $N$ as above. Then
$$
\sum_{j=1}^{n+1}\frac{1}{\beta_j}=\frac{\sqrt{3}(7n-9)\left[\left(2+\sqrt{3}\right)^{n+1}+\left(2-\sqrt{3}\right)^{n+1}\right]-4\left[\left(2+\sqrt{3}\right)^{n}-\left(2-\sqrt{3}\right)^{n}\right]}{6\left[\left(2+\sqrt{3}\right)^{n+1}-\left(2-\sqrt{3}\right)^{n+1}\right]}+\frac{9}{2}.
$$
\end{lem}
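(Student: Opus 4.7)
The plan is to mirror the Vieta's-theorem strategy used in Lemma~3.2. Write $\Phi(N)(x) = x^{n+1} + b_1 x^n + \cdots + b_n x + b_{n+1}$; since all $\beta_j > 0$, Vieta's formulas give $\sum_{j=1}^{n+1} 1/\beta_j = (-1)^n b_n / [(-1)^{n+1} b_{n+1}]$, where $(-1)^{n+1} b_{n+1} = \det N$ and $(-1)^n b_n$ equals the sum of the $n \times n$ principal minors of $N$. So the proof reduces to two determinant computations.

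For $\det N$, I would introduce the leading principal minors $n_k := \det N_k$ (with $n_0 := 1$). Expanding along the last row of $N_k$ for $3 \le k \le n$ gives the recurrence $n_k = \tfrac{4}{7} n_{k-1} - \tfrac{1}{49} n_{k-2}$, with $n_1 = 3/5$ and $n_2 = 11/35$ by direct computation. The characteristic equation $x^2 = \tfrac{4}{7} x - \tfrac{1}{49}$ has roots $r_\pm = (2 \pm \sqrt{3})/7$, so exactly as in Claim~1 of Lemma~3.2 we obtain a closed form $n_k = \alpha r_+^k + \gamma r_-^k$ whose coefficients are determined from $n_1, n_2$; this is where the $(2 \pm \sqrt{3})^{n+1}$ terms in the final answer originate. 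A further last-row expansion of the full matrix yields $\det N = \tfrac{3}{5} n_n - \tfrac{1}{35} n_{n-1}$, which simplifies cleanly using $R_\pm^2 = 4 R_\pm - 1$ (with $R_\pm := 2 \pm \sqrt{3}$).

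For the sum of $n \times n$ principal minors, I would exploit the tridiagonal and centrosymmetric structure of $N$: deleting the $i$-th row and column for $2 \le i \le n$ decouples the remainder into two blocks, one being $N_{i-1}$ and (after reversing row/column order in the other block, using the centrosymmetry of $N$) the other having determinant $n_{n+1-i}$; the boundary cases $i = 1$ and $i = n+1$ fit the same pattern with $n_0 = 1$. Hence $(-1)^n b_n = \sum_{k=0}^{n} n_k \, n_{n-k}$. Substituting the closed form of $n_k$ and expanding reduces each cross product to constants plus terms of the form $R_\pm^{n - 2k}$, which sum as geometric progressions in closed form (the identities $R_+ R_- = 1$ and $(1 + R_\pm)^2 = 6 R_\pm$ do most of the work here).

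Dividing the result by $\det N$ and rearranging yields the stated expression, with the additive $\tfrac{9}{2}$ emerging naturally once the constant contribution from the doubled boundary term $2 n_0 n_n$ is split off. The main obstacle is precisely this final simplification: the convolution sum produces a long linear combination of Lucas-like ($L_m := R_+^m + R_-^m$) and Fibonacci-like ($F_m := (R_+^m - R_-^m)/(2\sqrt{3})$) quantities, and repeated use of the identities $L_m = F_{m+1} - F_{m-1}$ and $F_{m+1} = 4 F_m - F_{m-1}$ is required to collapse everything into the specific single-sum form of the lemma (the coefficient $7n - 9$ in front of $L_{n+1}$ appears only after cancellations involving both the linear-in-$n$ part of the diagonal contributions and the $54 F_{n+1}$ absorbed into the $\tfrac{9}{2}$).
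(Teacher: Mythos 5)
Your proposal is correct and follows essentially the same route as the paper's proof: Vieta's theorem reducing the sum to $(-1)^n b_n/\det N$, the recurrence $w_i=\tfrac{4}{7}w_{i-1}-\tfrac{1}{49}w_{i-2}$ with $w_1=\tfrac{3}{5},\ w_2=\tfrac{11}{35}$ for the leading principal minors, the expansion $\det N=\tfrac{3}{5}w_n-\tfrac{1}{35}w_{n-1}$, and the convolution identity $(-1)^n b_n=\sum_{k=0}^{n}w_k w_{n-k}$ obtained from the tridiagonal, centrosymmetric structure (the paper phrases your block-reversal step as a permutation similarity preserving the determinant). The closing geometric-series evaluation is likewise the paper's computation, merely organized by you through the auxiliary identities $R_+R_-=1$ and $(1+R_\pm)^2=6R_\pm$.
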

\begin{proof}
Suppose that $\Phi(N)=x^{n+1}+b_1x^n+\cdots+b_nx+b_{n+1}.$ Then $\beta_j\, (j=1,2,\ldots,n+1)$ satisfies the following equation
$$
x^{n+1}+b_1x^n+\cdots+b_nx+b_{n+1}=0
$$
and so $\frac{1}{\beta_j}\, (j=1,2,\ldots,n+1)$ satisfies the following equation
$$
b_{n+1}x^{n+1}+b_{n}x^{n}+\cdots+b_1x+1=0.
$$
By Vieta's Theorem, we have
\begin{eqnarray}\label{eq:3.4}
\sum_{j=1}^{n+1}\frac{1}{\beta_j}=\frac{(-1)^nb_n}{(-1)^{n+1}b_{n+1}}=\frac{(-1)^nb_n}{\det N}.
\end{eqnarray}
For $1\leqslant i\leqslant n$, let $W_i$ be the $i$th order principal submatrix formed by the first $i$ rows and columns of $N$ and $w_i=\det W_i$. The following claim gives the formula of $w_i$, which will be used to calculate $(-1)^nb_n$ and $\det N$.
\begin{claim}
For $1\leqslant i\leqslant n, w_i=\frac{21+7\sqrt{3}}{30}\left(\frac{2+\sqrt{3}}{7}\right)^{i}+\frac{21-7\sqrt{3}}{30}\left(\frac{2-\sqrt{3}}{7}\right)^{i}$.
\end{claim}
\noindent{\bf Proof of Claim 4.}\
It is easy to see that $w_1=\frac{3}{5}, w_2=\frac{11}{35}.$ For $3\leqslant i\leqslant n$, expanding $\det W_i$ with respect to its last row, we obtain
$$
w_i=\frac{4}{7}w_{i-1}-\frac{1}{49}w_{i-2}.
$$
Then the characteristic equations of $\{w_i\}_{i\geqslant1}$ is $x^2=\frac{4}{7}x-\frac{1}{49}$, whose roots are $x_1=\frac{2+\sqrt{3}}{7}$ and $x_2=\frac{2-\sqrt{3}}{7}$. Suppose that
\begin{eqnarray}\label{eq:3.5}
w_i=\left(\frac{2+\sqrt{3}}{7}\right)^iz_1+\left(\frac{2-\sqrt{3}}{7}\right)^iz_2,
\end{eqnarray}
then the initial conditions, $w_1=\frac{3}{5}$ and $w_2=\frac{11}{35},$ lead to the system of equations
\begin{eqnarray*}
\left\{
  \begin{array}{l}
    \displaystyle \frac{2+\sqrt{3}}{7}z_1+\frac{2-\sqrt{3}}{7}z_2=\frac{3}{5}, \\[3pt]
    \displaystyle \left(\frac{2+\sqrt{3}}{7}\right)^2z_1+\left(\frac{2-\sqrt{3}}{7}\right)^2z_2=\frac{11}{35}.
  \end{array}
\right.
\end{eqnarray*}
Solving it, we get $z_1=\frac{21+7\sqrt{3}}{30}, z_2=\frac{21-7\sqrt{3}}{30}.$ Thus Claim 4 follows by substituting $z_1$ and $z_2$ back into (\ref{eq:3.5}).\\

By expanding $\det N$ with regards to its last row, we have
$$
\det N=\frac{3}{5}\det W_{n}-\frac{1}{35}\det W_{n-1}=\frac{3}{5}w_{n}-\frac{1}{35}w_{n-1}.
$$
Together with Claim 4, we immediately have the following claim.
\begin{claim}
$
\det N=\frac{49\sqrt{3}}{75}\left[\left(\frac{2+\sqrt{3}}{7}\right)^{n+1}-\left(\frac{2-\sqrt{3}}{7}\right)^{n+1}\right].
$
\end{claim}
The formula of $(-1)^nb_n$ is presented by the following claim. For the sake of convenience, let the diagonal entries of $N$ be $l_{ii}$ and $w_0$ be 1.
\begin{claim}
\begin{align*}
&(-1)^{n}b_{n}\\
=&\frac{343n+441}{150}\left[\left(\frac{2+\sqrt{3}}{7}\right)^{n+1}+\left(\frac{2-\sqrt{3}}{7}\right)^{n+1}\right]
-\frac{21}{50}\left[\left(\frac{2+\sqrt{3}}{7}\right)^n+\left(\frac{2-\sqrt{3}}{7}\right)^n\right]\\
&+\frac{14\sqrt{3}}{225}\left[\left(\frac{2+\sqrt{3}}{7}\right)^n-\left(\frac{2-\sqrt{3}}{7}\right)^n\right].
\end{align*}
\end{claim}
\noindent{\bf Proof of Claim 6.}\
Since $(-1)^{n}b_{n}$ is the sum of those principal minors of $N$ which have $n$ rows and columns, we have
\begin{eqnarray*}
  (-1)^nb_n
            &=&\sum_{i=1}^{n+1}\left|\begin{array}{cccccccc}
                                l_{11} & -\frac{1}{\sqrt{35}} & \cdots & 0 & 0& 0 & 0 & 0 \\
                                -\frac{1}{\sqrt{35}} & l_{22} & \cdots & 0 & 0 & 0 & 0& 0\\
                                \vdots & \vdots & \ddots & \vdots & \vdots & \vdots & \vdots & \vdots \\
                                0 & 0 & \cdots & l_{i-1,i-1} & 0 & 0 & \cdots & 0 \\
                                0 & 0 & \cdots & 0 & l_{i+1,i+1} & -\frac{1}{7} & \cdots & 0 \\
                               \vdots & \vdots & \cdots & \vdots & \vdots & \ddots & \vdots & \vdots \\
                               0 & 0 & \cdots & 0 & 0 & \cdots & l_{n,n}&-\frac{1}{\sqrt{35}}\\
                                0 & 0 & \cdots & 0 & 0 & \cdots & -\frac{1}{\sqrt{35}} &l_{n+1,n+1}
                              \end{array}\right|\\
      &=&\sum_{i=1}^{n+1} \left(\left|\begin{array}{cccc}
                                    l_{11} & -\frac{1}{\sqrt{35}} & \cdots & 0 \\
                                    -\frac{1}{\sqrt{35}} & l_{22} & \cdots & 0 \\
                                    \vdots & \vdots & \ddots & \vdots\\
                                    0 & 0 & \cdots & l_{i-1,i-1}
                                  \end{array}\right|\left|\begin{array}{cccc}
                                                                 l_{i+1,i+1} & -\frac{1}{7} & \cdots & 0 \\
                                                                \vdots & \ddots & \vdots & \vdots  \\
                                                                0 & \cdots & l_{n,n}&-\frac{1}{\sqrt{35}} \\
                                                                 0 & \cdots & -\frac{1}{\sqrt{35}} &l_{n+1,n+1}
                                                               \end{array}
                                                               \right|\right).
\end{eqnarray*}
Note that a permutation similarity transformation of a square matrix preserves its determinant. Together with the property of $N$, we have
$$
 \det W_{n+1-i}=\left|\begin{array}{cccc}
                                                                 l_{i+1,i+1} & -\frac{1}{3} & \cdots & 0 \\
                                                                \vdots & \ddots & \vdots & \vdots  \\
                                                                0 & \cdots & l_{n,n}&-\frac{1}{\sqrt{6}} \\
                                                                 0 & \cdots & -\frac{1}{\sqrt{6}} &l_{n+1,n+1}
                                                               \end{array}
                                                               \right|.
$$
Whence
\begin{eqnarray}\label{eq:3.6}
(-1)^nb_n=\sum_{i=1}^{n+1}w_{i-1}w_{n+1-i}=\sum_{i=0}^nw_iw_{n-i}.
\end{eqnarray}

Together with (\ref{eq:3.6}) and Claim 4, we have
\begin{align*}
(-1)^nb_n=&2w_n+\sum_{i=1}^{n-1}w_iw_{n-i}\\
         =&2w_n+(n-1)\left(\frac{21+7\sqrt{3}}{30}\right)^2\left(\frac{2+\sqrt{3}}{7}\right)^n
         +(n-1)\left(\frac{21-7\sqrt{3}}{30}\right)^2\left(\frac{2-\sqrt{3}}{7}\right)^n\\ &+\frac{49}{150}\sum_{i=1}^{n-1}\left(\frac{2+\sqrt{3}}{7}\right)^i\left(\frac{2-\sqrt{3}}{7}\right)^{n-i}
         +\frac{49}{150}\sum_{i=1}^{n-1}\left(\frac{2-\sqrt{3}}{7}\right)^i\left(\frac{2+\sqrt{3}}{7}\right)^{n-i}\\
         =&2w_n+(n-1)\left(\frac{21+7\sqrt{3}}{30}\right)^2\left(\frac{2+\sqrt{3}}{7}\right)^n
         +(n-1)\left(\frac{21-7\sqrt{3}}{30}\right)^2\left(\frac{2-\sqrt{3}}{7}\right)^n\\ &+\frac{49}{150}\left(\frac{2-\sqrt{3}}{7}\right)^n\sum_{i=1}^{n-1}\left(\frac{2+\sqrt{3}}{2-\sqrt{3}}\right)^i
         +\frac{49}{150}\left(\frac{2+\sqrt{3}}{7}\right)^n\sum_{i=1}^{n-1}\left(\frac{2-\sqrt{3}}{2+\sqrt{3}}\right)^i\\
        =&\frac{343n+441}{150}\left[\left(\frac{2+\sqrt{3}}{7}\right)^{n+1}+\left(\frac{2-\sqrt{3}}{7}\right)^{n+1}\right]
        -\frac{21}{50}\left[\left(\frac{2+\sqrt{3}}{7}\right)^n+\left(\frac{2-\sqrt{3}}{7}\right)^n\right]\\
        &-\frac{14\sqrt{3}}{225}\left[\left(\frac{2+\sqrt{3}}{7}\right)^n-\left(\frac{2-\sqrt{3}}{7}\right)^n\right].
\end{align*}

This completes the proof of Claim 6.

Substituting Claims 5-6 into (\ref{eq:3.4}) yields
$$
\sum_{j=1}^{n+1}\frac{1}{\beta_j}=\frac{\sqrt{3}(7n-9)\left[\left(2+\sqrt{3}\right)^{n+1}+\left(2-\sqrt{3}\right)^{n+1}\right]-4\left[\left(2+\sqrt{3}\right)^{n}-\left(2-\sqrt{3}\right)^{n}\right]}{6\left[\left(2+\sqrt{3}\right)^{n+1}-\left(2-\sqrt{3}\right)^{n+1}\right]}+\frac{9}{2},
$$
as desired.
\end{proof}
By Lemmas 3.1, 3.2 and 3.3, the following result follows immediately.
\begin{thm}
Let $B^2_n$ be the strong prism of a linear polyomino chain with $n$ squares. Then
\begin{align*}
&Kf^*(B^2_n)\\
=&\frac{\sqrt{3}(7n-9)(7n+3)\left[\left(2+\sqrt{3}\right)^{n+1}+\left(2-\sqrt{3}\right)^{n+1}\right]-4(7n+3)\left[\left(2+\sqrt{3}\right)^{n}-\left(2-\sqrt{3}\right)^{n}\right]}{3\left[\left(2+\sqrt{3}\right)^{n+1}-\left(2-\sqrt{3}\right)^{n+1}\right]}\\
&+\frac{49}{3}n^3+70n^2+141n+46.
\end{align*}
\end{thm}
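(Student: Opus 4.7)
The plan is to simply combine Lemmas~3.1, 3.2 and 3.3, since Lemma~3.1 has already packaged $Kf^*(B_n^2)$ as a weighted sum over the eigenvalues of $\mathcal{L}_S$ (which are explicit) together with the two reciprocal-eigenvalue sums $\sum_{i=2}^{n+1}\tfrac{1}{\alpha_i}$ and $\sum_{j=1}^{n+1}\tfrac{1}{\beta_j}$ whose closed forms are supplied by Lemmas~3.2 and~3.3. The whole proof is therefore an algebraic substitution followed by collection of like terms; no new matrix analysis is required.

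First I would start from Lemma~3.1 and simplify the ``diagonal'' contribution $4(7n+3)\bigl(4\cdot\tfrac{5}{6}+(2n-2)\tfrac{7}{8}\bigr)$ arising from the spectrum of $\mathcal{L}_S$. This is a polynomial in $n$ and after distributing $4(7n+3)$ I get
\begin{equation*}
4(7n+3)\Bigl(\tfrac{10}{3}+\tfrac{7(n-1)}{4}\Bigr)=\tfrac{40(7n+3)}{3}+7(n-1)(7n+3),
\end{equation*}
which I would keep separate for the moment. Next I would insert Lemma~3.2 into the $\alpha$-term: $4(7n+3)\cdot\tfrac{1}{2}\cdot\tfrac{49n^3+63n^2+38n}{6(7n+3)}$ cancels the factor $(7n+3)$ nicely and reduces to $\tfrac{49n^3+63n^2+38n}{3}$.

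Second, I would insert Lemma~3.3 into the $\beta$-term: multiplying by $4(7n+3)\cdot\tfrac{1}{2}=2(7n+3)$ produces exactly the fraction displayed in the statement,
\begin{equation*}
\frac{\sqrt{3}(7n-9)(7n+3)\bigl[(2+\sqrt{3})^{n+1}+(2-\sqrt{3})^{n+1}\bigr]-4(7n+3)\bigl[(2+\sqrt{3})^{n}-(2-\sqrt{3})^{n}\bigr]}{3\bigl[(2+\sqrt{3})^{n+1}-(2-\sqrt{3})^{n+1}\bigr]},
\end{equation*}
plus a clean constant piece $2(7n+3)\cdot\tfrac{9}{2}=9(7n+3)=63n+27$ coming from the additive $\tfrac{9}{2}$ in Lemma~3.3.

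Finally, I would add together all three polynomial contributions: $\tfrac{40(7n+3)}{3}$, $7(n-1)(7n+3)$, $\tfrac{49n^3+63n^2+38n}{3}$ and $63n+27$. Putting everything over a common denominator of $3$, the numerator becomes $49n^3+63n^2+318n+120+3(49n^2-28n-21)+3(63n+27)=49n^3+210n^2+423n+138$, so the polynomial part collapses to $\tfrac{49}{3}n^3+70n^2+141n+46$, matching the stated formula. The only mild obstacle is the bookkeeping in combining these four polynomial contributions correctly; nothing deeper is involved, since all of the genuinely nontrivial eigenvalue work has been done in Lemmas~3.2 and~3.3.
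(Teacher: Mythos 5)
Your proposal is correct and follows exactly the paper's route: the paper derives Theorem 3.4 immediately from Lemmas 3.1, 3.2 and 3.3, which is precisely the substitution and bookkeeping you carry out (and your arithmetic, including the collapsed polynomial $\tfrac{49}{3}n^3+70n^2+141n+46$, checks out).
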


The explicit closed formula of the spanning trees of $B^2_n$ is in the following.
\begin{thm}Let $B^2_n$ be the strong prism of a linear polyomino chain with $n$ squares. Then
$$
\tau(B^2_n)=\sqrt{3}\cdot27\cdot2^{8n-3}\left[\left(2+\sqrt{3}\right)^{n+1}-\left(2-\sqrt{3}\right)^{n+1}\right].
$$
\end{thm}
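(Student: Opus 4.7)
The plan is to apply Lemma~2.2, which relates $\tau(B_n^2)$ to the product of all nonzero normalized Laplacian eigenvalues and the product of vertex degrees. Specifically, for $G = B_n^2$ with $|V|=4n+4$ and $|E|=2(7n+3)$, Lemma~2.2 gives
$$
\tau(B_n^2) = \frac{1}{2|E(B_n^2)|}\Bigl(\prod_{i=1}^{4n+4} d_i\Bigr)\Bigl(\prod_{k=2}^{4n+4}\lambda_k\Bigr) = \frac{1}{4(7n+3)}\Bigl(\prod_{i} d_i\Bigr)\Bigl(\prod_{k\ge 2}\lambda_k\Bigr).
$$
So the whole argument reduces to evaluating the two products on the right.

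First I would compute the degree product. In $B_n$ the four corner vertices $u_1,u_{n+1},v_1,v_{n+1}$ have degree $2$ and the remaining $2(n-1)$ interior vertices have degree $3$. Since $B_n^2 = B_n\boxtimes K_2$, the degree of a vertex $(x,i)$ equals $2d_{B_n}(x)+1$. Hence $B_n^2$ has $8$ corner vertices of degree $5$ and $4(n-1)$ interior vertices of degree $7$, giving
$$
\prod_{i} d_i = 5^{8}\cdot 7^{4n-4}.
$$
(A quick sanity check: summing the degrees yields $40 + 28(n-1) = 4(7n+3) = 2|E(B_n^2)|$.)

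Next I would assemble $\prod_{k\ge 2}\lambda_k$ from the block decomposition developed in Section~3. By Theorem~2.1 together with the further similarity $T(\tfrac12\mathcal{L}_A)T$, the spectrum of $\mathcal{L}(B_n^2)$ is the union of (i)~the diagonal spectrum of $\mathcal{L}_S$ — namely $\tfrac65$ with multiplicity $4$ and $\tfrac87$ with multiplicity $2n-2$ — and (ii)~the eigenvalues $2\alpha_i$ and $2\beta_j$ of $\mathcal{L}_A$, where $\alpha_i,\beta_j$ are the eigenvalues of $M$ and $N$ and $\alpha_1=0$. Therefore
$$
\prod_{k\ge 2}\lambda_k=\Bigl(\tfrac65\Bigr)^{4}\Bigl(\tfrac87\Bigr)^{2n-2}\cdot 2^{2n+1}\Bigl(\prod_{i=2}^{n+1}\alpha_i\Bigr)\Bigl(\prod_{j=1}^{n+1}\beta_j\Bigr).
$$
The missing two pieces are already available: Claim~2 in the proof of Lemma~3.2 gives $\prod_{i=2}^{n+1}\alpha_i = (-1)^n a_n = \tfrac{7n+3}{25}\bigl(\tfrac17\bigr)^{n-1}$, and Claim~5 in the proof of Lemma~3.3 gives $\prod_{j=1}^{n+1}\beta_j = \det N = \tfrac{49\sqrt{3}}{75}\bigl[(\tfrac{2+\sqrt 3}{7})^{n+1}-(\tfrac{2-\sqrt 3}{7})^{n+1}\bigr]$.

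Finally I would substitute all three ingredients into Lemma~2.2 and simplify. The expected telescoping is as follows: the factor $7n+3$ cancels with $\tfrac{1}{4(7n+3)}$, the powers of $7$ combine to cancel exactly (the numerator contributes $7^{4n-4}\cdot 7^{2}$ from $49$ versus denominator $7^{2n-2}\cdot 7^{n-1}\cdot 7^{n+1} = 7^{4n-2}$), and the powers of $5$ collapse via $5^{8}/(5^{4}\cdot 5^{2}\cdot 3\cdot 5^{2}) = \tfrac{1}{3}$. Counting powers of $2$ gives $2^{6n-6}\cdot 2^{2n+1}\cdot 2^{4}/2^{2} = 2^{8n-3}$ after absorbing the $6^{4}=2^{4}3^{4}$ and the $1296 = 2^{4}\cdot 81$ from $\bigl(\tfrac65\bigr)^{4}$, and $3^{4}\cdot 49\cdot \sqrt{3}/(3\cdot 49) = 27\sqrt{3}$ takes care of the $3$'s. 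The only real obstacle is disciplined bookkeeping of these cancellations; the factor $(2+\sqrt 3)^{n+1}-(2-\sqrt 3)^{n+1}$ is inherited untouched from $\det N$ and appears as the final factor. Collecting terms yields
$$
\tau(B_n^2) = \sqrt 3\cdot 27\cdot 2^{8n-3}\bigl[(2+\sqrt 3)^{n+1}-(2-\sqrt 3)^{n+1}\bigr],
$$
as claimed.
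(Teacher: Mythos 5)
Your proposal is correct and follows essentially the same route as the paper's proof: Lemma~2.2 combined with the spectrum of $\mathcal{L}_S$, the eigenvalues $2\alpha_i,2\beta_j$ of $\mathcal{L}_A$, the degree product $5^8\cdot 7^{4n-4}$, and the products $\prod_{i=2}^{n+1}\alpha_i=(-1)^na_n$ from Claim~2 and $\prod_{j=1}^{n+1}\beta_j=\det N$ from Claim~5. The only blemish is cosmetic: in your $3$-bookkeeping you write $3^4\cdot 49\cdot\sqrt{3}/(3\cdot 49)$, inserting a spurious $49/49$ even though the $7^2$ from $\det N$ was already consumed in your (correct) count of powers of $7$; this does not affect the final cancellation, which matches the paper's result exactly.
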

\begin{proof}
From the proof of Lemma 3.2, we know that $\alpha_i\, (i=2,3,\ldots,n+1)$ is the root of the equation $x^n+a_1x^{n-1}+\cdots+a_{n-1}x+a_n=0.$ Then we have
$$
\prod_{i=2}^{n+1}\alpha_i=(-1)^na_n.
$$
By Claim 2, we have
$$
\prod_{i=2}^{n+1}\alpha_i=\frac{7n+3}{25}\left(\frac{1}{7}\right)^{n-1}.
$$
Similarly,
$$
\prod_{j=1}^{n+1}\beta_j=\frac{49\sqrt{3}}{75}\left[\left(\frac{2+\sqrt{3}}{7}\right)^{n+1}-\left(\frac{2-\sqrt{3}}{7}\right)^{n+1}\right].
$$
Note that
$$
\prod_{i=1}^{4n+4}d_i(B^2_n)=5^8\cdot7^{4n-4},\ \ |E(B^2_n)|=2(7n+3).
$$
Together with Lemma 2.2, we have
\begin{align*}
\tau(B^2_n)&=\frac{1}{2|E(B^2_n)|}\Big[\big(\prod_{i=1}^{4n+4}d_i(B^2_n)\big)\cdot\big(\frac{6}{5}\big)^4\cdot\big(\frac{8}{7}\big)^{2n-2}\cdot\big(\prod_{i=2}^{n+1}2\alpha_i\big)\cdot\big(\prod_{j=1}^{n+1}2\beta_j\big)\Big]\\
&=\sqrt{3}\cdot27\cdot2^{8n-3}\left[\left(2+\sqrt{3}\right)^{n+1}-\left(2-\sqrt{3}\right)^{n+1}\right].
\end{align*}
\end{proof}

At the end of this section, we show that the degree-Kirchhoff index of $B^2_n$ is approximately one eighth of its Gutman index.
\begin{lem}\label{3.6}
Let $B^2_n$ be the strong prism of a linear polyomino chain with $n$ squares. Then
$$
Gut(B^2_n)=\frac{392n^3}{3}+364n^2+\frac{1102n}{3}+38.
$$
\end{lem}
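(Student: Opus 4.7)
The plan is to exploit the strong-product structure $B_n^2 = B_n\boxtimes K_2$ to reduce $Gut(B_n^2)$ to a linear combination of three weighted distance sums on the ladder graph $B_n$, and then to evaluate each of those sums in closed form by elementary double-sum manipulations.

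\emph{Step 1 (strong-product identities).} For any two distinct vertices $(x,a)$ and $(y,b)$ of $B_n^2$, the standard strong-product distance formula gives
$$d_{B_n^2}\bigl((x,a),(y,b)\bigr) \;=\; \max\bigl(d_{B_n}(x,y),\,d_{K_2}(a,b)\bigr),$$
which equals $d_{B_n}(x,y)$ when $x\neq y$, and equals $1$ when $x=y$ and $a\neq b$. The degree of $(x,a)$ in $B_n^2$ is $2\deg_{B_n}(x)+1$.

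\emph{Step 2 (splitting the Gutman sum).} I would split the sum defining $Gut(B_n^2)$ into the same-copy pairs (in $V_1\times V_1$ or $V_2\times V_2$) and the cross pairs (in $V_1\times V_2$), and further split the cross pairs into the same-position case ($x=y$) and the different-position case ($x\neq y$). A direct accounting using Step~1 yields
$$Gut(B_n^2) \;=\; 4\!\!\sum_{\{x,y\}\subset V(B_n)}\!(2\deg_{B_n}(x)+1)(2\deg_{B_n}(y)+1)\,d_{B_n}(x,y) \;+\; \sum_{x\in V(B_n)}(2\deg_{B_n}(x)+1)^2.$$
Expanding $(2a+1)(2b+1)=4ab+2(a+b)+1$ rewrites this as
$$Gut(B_n^2) \;=\; 16\,Gut(B_n) + 8\,DD(B_n) + 4\,W(B_n) + \sum_{x\in V(B_n)}(2\deg_{B_n}(x)+1)^2,$$
where $DD(B_n)=\sum_{\{x,y\}}(\deg_{B_n}(x)+\deg_{B_n}(y))\,d_{B_n}(x,y)$ is the degree-distance (Schultz) index.

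\emph{Step 3 (ladder sums).} In $B_n$, the four corners $u_1,u_{n+1},v_1,v_{n+1}$ have degree $2$, the remaining $2n-2$ vertices have degree $3$, and the ladder distances satisfy $d_{B_n}(u_i,u_j)=d_{B_n}(v_i,v_j)=|i-j|$ and $d_{B_n}(u_i,v_j)=|i-j|+1$. Each of the four quantities on the right-hand side of the formula in Step~2 then reduces to a polynomial double sum in $i,j$; writing $\deg_{B_n}(u_i)=3-\epsilon_i$ with $\epsilon_i=1$ iff $i\in\{1,n+1\}$ isolates the corner correction. The resulting closed forms are
$$W(B_n)=\tfrac{2n(n+1)(n+2)}{3}+(n+1)^2,\qquad Gut(B_n)=6n^3+15n^2+10n+1,$$
$$DD(B_n)=4n^3+14n^2+12n+2,\qquad \sum_{x\in V(B_n)}(2\deg_{B_n}(x)+1)^2=98n+2.$$

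\emph{Step 4 (assembly).} Substituting these four expressions into the formula of Step~2 and collecting powers of $n$ produces exactly $\tfrac{392n^3}{3}+364n^2+\tfrac{1102n}{3}+38$, as claimed.

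The only nontrivial obstacle is the bookkeeping of Step~3. The mixed $u$--$v$ contributions to $Gut(B_n)$ and $DD(B_n)$ have to be split into the $i=j$ case (at distance $1$) and the $i\neq j$ case (at distance $|i-j|+1$), and the corner correction $\epsilon_i$ must be tracked through each weighted sum; organising the work this way keeps the algebra polynomial in $n$ with no residual cancellation surprises. Steps~1, 2 and 4 are purely mechanical.
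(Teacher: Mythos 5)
Your proposal is correct, but it takes a genuinely different route from the paper. You reduce via the general strong-prism identity $Gut(B_n^2)=16\,Gut(B_n)+8\,DD(B_n)+4\,W(B_n)+\sum_{x}(2d_x+1)^2$, which is valid: the distance formula $d_{G\boxtimes K_2}=\max(d_G,d_{K_2})$ and the degree formula $2d_x+1$ are standard, and the split into same-copy pairs, cross pairs with $x\neq y$, and antipodal pairs $x=y$ is exactly right. Your four ladder evaluations also check out ($W(B_n)=\tfrac{2n(n+1)(n+2)}{3}+(n+1)^2$, $Gut(B_n)=6n^3+15n^2+10n+1$, $DD(B_n)=4n^3+14n^2+12n+2$, $\sum_x(2d_x+1)^2=98n+2$; e.g.\ at $n=1$, where $B_1=C_4$, the middle two give $32$ each, and at $n=2$ the assembly gives $16\cdot129+8\cdot114+4\cdot25+198=3274$, matching the stated cubic), and collecting coefficients indeed yields $\tfrac{392}{3}$, $364$, $\tfrac{1102}{3}$, $38$. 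The paper, by contrast, never leaves $B_n^2$: it partitions the $4n+4$ vertices into $n+1$ column cliques $K_4$ (all four vertices of an end column have degree $5$, of an inner column degree $7$), observes that the $\binom{4}{2}=6$ intra-column pairs sit at distance $1$ while the $16$ pairs between columns $i<j$ contribute total distance $16(j-i)+8$ (eight intra-rail pairs at distance $j-i$, eight cross pairs at $j-i+1$), and then sums the five resulting cases directly. What your approach buys is modularity and generality: the reduction in your Step~2 holds for the strong prism of any connected graph, so the same machinery would handle other base chains with only Step~3 redone, and it reuses classical indices with independent sanity checks. What the paper's approach buys is brevity: a single tailored count exploiting the column structure of $B_n^2$, with no need to introduce the Schultz index $DD$ or track the $\epsilon_i$ corner corrections through three separate weighted sums.
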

\begin{proof}
By simple calculation, we obtain
\begin{align*}
Gut(B^2_n)&=(n-1)\times7^2\times\dbinom{4}{2}+2\times5^2\times\dbinom{4}{2}+4\times5^2\times(4n+2)+4\times7^2\times\sum_{2\leq i<j\leq n}[4(j-i)+2]\\
&\quad+2\times4\times5\times7\times\sum_{2\leq j\leq n}[4(j-1)+2]\\
&=694n+206+196\sum_{2\leq i<j\leq n}\big[4(j-i)+2\big]+280\sum_{2\leq j\leq n}\big[4(j-1)+2\big]\\
&=694n+206+196\sum_{1\leq i<j\leq n}\big[4(j-i)+2\big]+84\sum_{2\leq j\leq n}\big[4(j-1)+2\big]\\
&=694n+206+196\Big[\frac{2(n+1)n(n-1)}{3}+(n-1)n\Big]+84\big[2(n-1)n+2n-2\big]\\
&=\frac{392n^3}{3}+364n^2+\frac{1102n}{3}+38.
\end{align*}
This completes the proof.
\end{proof}
\begin{thm}
Let $B^2_n$ denote a linear polyomino chain with $n$ squares. Then
$$
\lim_{n\rightarrow \infty}\frac{Kf^*(B^2_n)}{Gut(B^2_n)}=\frac{1}{8}.
$$
\end{thm}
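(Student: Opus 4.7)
The plan is to reduce the limit to a comparison of leading cubic coefficients, after showing that the transcendental-looking fraction in the formula for $Kf^*(B^2_n)$ degenerates to a polynomial of lower order in $n$ as $n\to\infty$.

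First I would rewrite the big fraction in Theorem~3.4. Factor $(2+\sqrt3)^{n+1}$ out of both the numerator and denominator and set $r_n=\left(\tfrac{2-\sqrt3}{2+\sqrt3}\right)^{n+1}$. Since $0<2-\sqrt3<1<2+\sqrt3$, we have $r_n\to 0$ and also $(2-\sqrt3)^n\to 0$. Therefore, after cancellation,
\[
\frac{\sqrt3(7n-9)(7n+3)[(2+\sqrt3)^{n+1}+(2-\sqrt3)^{n+1}]-4(7n+3)[(2+\sqrt3)^n-(2-\sqrt3)^n]}{3[(2+\sqrt3)^{n+1}-(2-\sqrt3)^{n+1}]}
\]
tends (in the sense that its difference with the main polynomial is $O(n^2 r_n)=o(1)$) to
\[
\frac{\sqrt3(7n-9)(7n+3)}{3}-\frac{4(7n+3)}{3(2+\sqrt3)},
\]
which is $O(n^2)$. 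In particular, this whole piece contributes nothing to the cubic coefficient of $Kf^*(B^2_n)$.

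Next I would read off the leading behavior of both invariants. From Theorem~3.4, once the fraction is absorbed into the lower-order terms, $Kf^*(B^2_n)=\tfrac{49}{3}n^3+O(n^2)$. From Lemma~\ref{3.6}, $Gut(B^2_n)=\tfrac{392}{3}n^3+O(n^2)$. Dividing and letting $n\to\infty$,
\[
\lim_{n\to\infty}\frac{Kf^*(B^2_n)}{Gut(B^2_n)}=\frac{49/3}{392/3}=\frac{49}{392}=\frac{1}{8},
\]
which gives the claim.

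The only even mildly delicate point is confirming that the exponential fraction really is $O(n^2)$ rather than $O(n^3)$; this is purely the observation that $|2-\sqrt3|<1$ so the $(2-\sqrt3)^{\bullet}$ terms decay geometrically and the $(2+\sqrt3)^{n+1}$ factors cancel between numerator and denominator. Once that is spelled out, the remainder is a one-line comparison of cubic coefficients, so I do not expect a real obstacle beyond bookkeeping.
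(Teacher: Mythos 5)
Your proposal is correct and takes essentially the same route as the paper: the paper's proof simply combines the closed formula of Theorem~3.4 with Lemma~3.6 and declares the limit immediate, which amounts to exactly the leading-coefficient comparison $\frac{49/3}{392/3}=\frac{1}{8}$ that you carry out. Your explicit check that the exponential fraction is $O(n^2)$ (because $0<2-\sqrt{3}<1$ makes those terms decay geometrically while the $\left(2+\sqrt{3}\right)^{n+1}$ factors cancel) merely fills in the asymptotic detail the paper leaves implicit.
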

\begin{proof}
By Lemma \ref{3.6}, we have
$$
Gut(B^2_n)=\frac{392n^3}{3}+364n^2+\frac{1102n}{3}+38.
$$
Therefore, together with Theorem 3.4, Theorem 3.7 follows immediately.
\end{proof}


\begin{thebibliography}{99}
\small \setlength{\itemsep}{-.8mm}
\bibitem{E-A-J}E. Bendito, A. Carmona, A. M. Encinas, J. M. Gesto, A formula for the Kirchhoff index, Int. J. Quantum Chem 108 (2008) 1200-1206.
\bibitem{22s}M. Bianchi, A. Cornaro, J.L. Palacios, A. Torriero, Bounds for the Kirchhoff index via majorization
techniques, J. Math. Chem. 51 (2013) 569-587.
\bibitem{S-B-B}S. B. Bozkurt, D. Bozkurt, On the sum of powers of normalized Laplacian eigenvalues of graphs, MATCH, Commun. Math. Comput. Chem. 68 (2012) 917-930.
\bibitem{44s}A. Carmona, A.M. Encinas, M. Mitjana, Kirchhoff index of periodic linear chains, J. Math. Chem. 53 (2015) 1195-1206.
\bibitem{500}A. Carmona, A.M. Encinas, M., Mitjana, Effective resistances for ladder-like chains, Int. J. Quantum Chem 114 (2014) 1670-1677.
\bibitem{10}H.Y. Chen, F.J. Zhang, Resistance distance and the normalized Laplacian spectrum, Discrete Appl. Math. 155 (2007) 654-661.
\bibitem{4} F.R.K. Chung, Spectral Graph Theory, American Mathematical Society Providence, RI, 1997.
\bibitem{12}L.H. Feng, I. Gutman, G.H. Yu, Degree Kirchhoff index of unicyclic Graphs, MATCH Commun. Math. Comput. Chem. 69 (2013) 629-648.
\bibitem{008}I. Gutman, Selected properties of the Schultz molecular topological index, J. Chem. Inf. Comput. Sci. 34 (1994) 1087-1089.
\bibitem{800}J. Huang, S.C. Li, On the normalised Laplacian spectrum, degree-Kirchhoff index and spanning trees of graphs, Bull. Aust. Math. Soc.  91 (3) (2015) 353-367.
\bibitem{J-H-L}J. Huang, S.C. Li, L.Q. Sun, The normalized Laplacians, degree-Kirchhoff index and the spanning trees of linear hexagonal chains, Discrete Appl. Math. 207 (2016) 67-79.
\bibitem{hl}J. Huang, S.C. Li, X.C. Li, The normalized Laplacian degree-Kirchhoff index and spanning trees of the linear polyomino chains, Appl. Math. Comput. 289 (2016) 324-334.
\bibitem{5} D.J. Klein, Resistance-distance sum rules, Croat. Chem. Acta. 75 (2002) 633-649.
\bibitem{3}D.J. Klein, O. Ivanciuc, Graph cyclicity, excess conductance, and resistance deficit, J. Math. Chem. 30 (2001) 271-287.
\bibitem{001}D.J. Klein, I. Lukovits, I. Gutman, On the definition of the hyper-Wiener index for cycle-containing structures, J. Chem. Inf. Comput. Sci. 35 (1995) 50-52.
\bibitem{1} D.J. Klein, M. Randi\'{c}, Resistance distance, J. Math. Chem. 12 (1993) 81-95.
\bibitem{ZZ3}J.B. Liu, X.F. Pan, J. Cao, F.F. Hu, A note on some physical and chemical indices of clique-inserted lattices, J. Stat. Mech. Theory Exp.  6 (2014) P06006, 8 pp.
\bibitem{9}L. Lov\'{a}sz, Random walks on graphs: a survey, in combinatorics, Paul Erd\"{o}s is Eighty, Bolyai Soc. Math. Stud. 2 (1) (1993) 1-46.
\bibitem{p1}Z.M. Li, Z. Xie, J.P. Li, Y.G. Pan, Resistance distance-based graph invariants and spanning trees of graphs derived from the strong prism of a star, Appl. Math. Comput. 382 (2020) 125335.
\bibitem{005}J.L. Palacios, Foster's formulas via probability and the Kirchhoff index, Methodol. Comput. Appl. Probab. 6 (2004) 381-387.
\bibitem{006}J.L. Palacios, Closed-form formulas for Kirchhoff index, Int. J. Quantum Chem 81 (2001) 135-140.
\bibitem{060}J. Palacios, J. M. Renom, Another look at the degree-Kirchhoff index, Int. J. Quantum Chem 111 (2011) 3453-3455.
\bibitem{007}P. Paulraja, V.S. Agnes, Gutman index of product graphs, Discrete Math. Algorithms Appl. 6 (2014), 1450058, 20 pp.
\bibitem{p2}Y.G. Pan, C. Liu, J.P. Li, Kirchhoff Indices and Numbers of Spanning Trees of Molecular Graphs Derived from Linear Crossed Polyomino Chain, Polycyclic Aromatic Compounds, 10.1080/10406638.2020.1725898.
\bibitem{p3}Y.G. Pan, J.P. Li, Resistance distance-based graph invariants and spanning trees of graphs derived from the strong product of $p_2$ and $c_n$, (2019) ArXiv:1906.04339.
\bibitem{q}X.L. Qi, B. Zhou, On the degree Kirchhoff index of unicyclic graphs, Discrete Appl. Math. 284 (2020) 86-98.
\bibitem{700}W. Z. Wang, D. Yang and Y. F. Luo, The Laplacian polynomial and Kirchhoff index of graphs derived from regular graphs, Discrete Appl. Math. 161 (2013) 3063-3071.
\bibitem{Y-W-Z}Y. Wang, W.W. Zhang, Kirchhoff index of linear pentagonal chains, Int. J. Quantum Chem 110 (2010) 1594-1604.
\bibitem{011}H. Wiener, Structural determination of paraffin boiling points, J. Am. Chem. Soc. 69 (1947) 17-20.
\bibitem{X-Z-F}P.C. Xie, Z.Z. Zhang, F. Comellas, On the spectrum of the normalized Laplacian of iterated triangulations of graphs, Appl. Math. Comput.  273  (2016) 1123-1129.
\bibitem{34}Y.J. Yang, H.P. Zhang, Kirchhoff index of Linear Hexagonal Chains, Int. J. Quantum Chem (2008) 503-512.
\bibitem{11s}Y.J. Yang, H.P. Zhang, D.J. Klein, New Nordhaus-Gaddum-type results for the Kirchhoff index, J. Math. Chem. 49 (2011) 1587-1598.
\bibitem{35}Z. Yarahmadi,  A.R. Ashrafi,  S. Moradi,  Extremal  polyomino chains with repect to Zagreb indices,  App. Math. Letters. 25 (2012) 166-171.
\bibitem{003}H.P. Zhang, Y.J. Yang, Resistance distance and Kirchhoff index in circulant graphs, Int. J. Quantum Chem 107 (2007) 330-339.
\bibitem{004}H.P. Zhang, Y.J. Yang, C.W. Li, Kirchhoff index of composite graphs, Discrete Appl. Math. 157 (2009) 2918-2927.
\bibitem{33s}B. Zhou, N. Trinajsti\'c, On resistance-distance and Kirchhoff index, J. Math. Chem. 46 (2009) 283-289.
\end{thebibliography}
\end{document}